\definecolor{fig}{RGB}{76,78,92}
\newcommand{\dd}{{\mathrm d}}
\newcommand{\sobh}[1]{{\ensuremath{\operatorname H}^{#1}}}
\definecolor{amethyst}{rgb}{0.6, 0.4, 0.8}
\definecolor{applegreen}{rgb}{0.55, 0.71, 0.0}
\definecolor{aqua}{rgb}{0.0, 1.0, 1.0}
\definecolor{asparagus}{rgb}{0.53, 0.66, 0.42}
\definecolor{amber(sae/ece)}{rgb}{1.0, 0.49, 0.0}
 	\definecolor{armygreen}{rgb}{0.29, 0.33, 0.13}
	\definecolor{shitbrown}{rgb}{0.43, 0.21, 0.1}
	\definecolor{brightpink}{rgb}{1.0, 0.0, 0.5}
	\definecolor{brightube}{rgb}{0.82, 0.62, 0.91}
	 	\definecolor{byzantine}{rgb}{0.74, 0.2, 0.64}
		\definecolor{chartreuse(web)}{rgb}{0.5, 1.0, 0.0}
\newcommand{\fT}{\stackrel{_\rightarrow}{\mathtt T}}
\newcommand{\bT}{\stackrel{_\leftarrow}{\mathtt T}}
\newcommand{\fL}{\stackrel{_\rightarrow}{\mathtt L}}
\newcommand{\bL}{\stackrel{_\leftarrow}{\mathtt L}}
\newcommand{\fR}{\stackrel{_\rightarrow}{\mathtt R}}
\newcommand{\bR}{\stackrel{_\leftarrow}{\mathtt R}}
\newcommand{\rr}{{\mathtt r}}
\theoremstyle{plain}
\newtheorem{theorem}{Theorem}[section]
\newtheorem{remark}[theorem]{Remark}
\newtheorem{lemma}[theorem]{Lemma}
\newtheorem{corollary}[theorem]{Corollary}
\newcommand{\Norm}[1]{\ensuremath{\left\|#1\right\|}}
\renewcommand{\vec}[1]{\ensuremath{\boldsymbol{#1}}}
\newcommand{\leb}{\text{Leb}}
\newcommand{\LL}{\ensuremath{\operatorname L}\xspace}
\renewcommand{\leb}[1]{\ensuremath{\LL^{#1}}}
\newcommand{\ltwop}[2]{\ensuremath{\!\left\langle{#1,#2}\right\rangle}}
\newcommand{\cH}{\stackrel{_\rightarrow}{\mathscr H}}
\newcommand{\cbH}{\stackrel{_\leftarrow}{\mathscr H}}
\newcommand{\cS}{\ensuremath{\mathscr S}}
\newcommand{\cV}{\ensuremath{\mathscr V}}
\renewcommand{\d}{\mathop{}\!\mathrm{d}}
\newcommand{\Forall}{\:\forall\:}
\newcommand{\Foreach}{\quad\Forall}
\author{
  Andreas E. Kyprianou
}
\address{
  Andreas Kyprianou
  \thanks{\newline \indent
     Department of Statistics, \newline \indent University of Warwick, \newline \indent Coventry \newline \indent CV4 7AL, UK.
    \newline \indent {\tt{andreas.kyprianou@warwick.ac.uk}}.
}}
\author{
   Aaron Pim
}
\address{
  Aaron Pim, Tristan Pryer
  \thanks{
   \newline \indent Department of Mathematical Sciences, \newline \indent University of Bath, \newline \indent Bath\newline \indent  BA2 7AY, UK.\newline \indent
    {\tt{arp46@bath.ac.uk}, {\tt{tmp38@bath.ac.uk}}}.
}}
\author{
  Tristan Pryer
}
\thanks{ All authors are grateful for  support from the EPSRC programme grant EP/W026899/2 and TP for  support from Leverhulme RPG-2021-238.  }
\begin{document}

\title{A Unified Framework from Boltzmann Transport to Proton Treatment Planning}

\maketitle

\begin{abstract}
  This work develops a rigorous mathematical formulation of proton
  transport by integrating both deterministic and stochastic
  perspectives. The deterministic framework is based on the
  Boltzmann-Fokker-Planck equation, formulated as an operator equation
  in a suitable functional setting. The stochastic approach models
  proton evolution via a track-length parameterised diffusion process,
  whose infinitesimal generator provides an alternative description of
  transport.

  A key result is the duality between the stochastic and deterministic
  formulations, established through the adjoint relationship between
  the transport operator and the stochastic generator. We prove that
  the resolvent of the stochastic process corresponds to the Green’s
  function of the deterministic equation, providing a natural link
  between fluence-based and particle-based transport descriptions.

  The theory is applied to dose computation, where we show that the
  classical relation
  \begin{equation*}
    \text{dose} = \text{fluence} \times \text{mass stopping power}
  \end{equation*}
  arises consistently in both approaches.

  Building on this foundation, we formulate a hybrid optimisation
  framework for treatment planning, in which dose is computed using a
  stochastic model while optimisation proceeds via adjoint-based PDE
  methods. We prove existence and differentiability of the objective
  functional and derive the first-order optimality system. This
  framework bridges stochastic simulation with deterministic control
  theory and provides a foundation for future work in constrained,
  adaptive and uncertainty-aware optimisation in proton therapy.
\end{abstract}

\section{Introduction}
\label{sec:intro}

\subsection{Motivation}

Proton therapy is an advanced form of radiotherapy that exploits the
physical properties of charged particles to deliver highly localised
radiation doses to tumours while minimising collateral damage to
healthy tissue \cite{newhauser2015physics}. 

\begin{wrapfigure}{r}{0.45\linewidth}
  \centering
  \includegraphics[width=\linewidth]{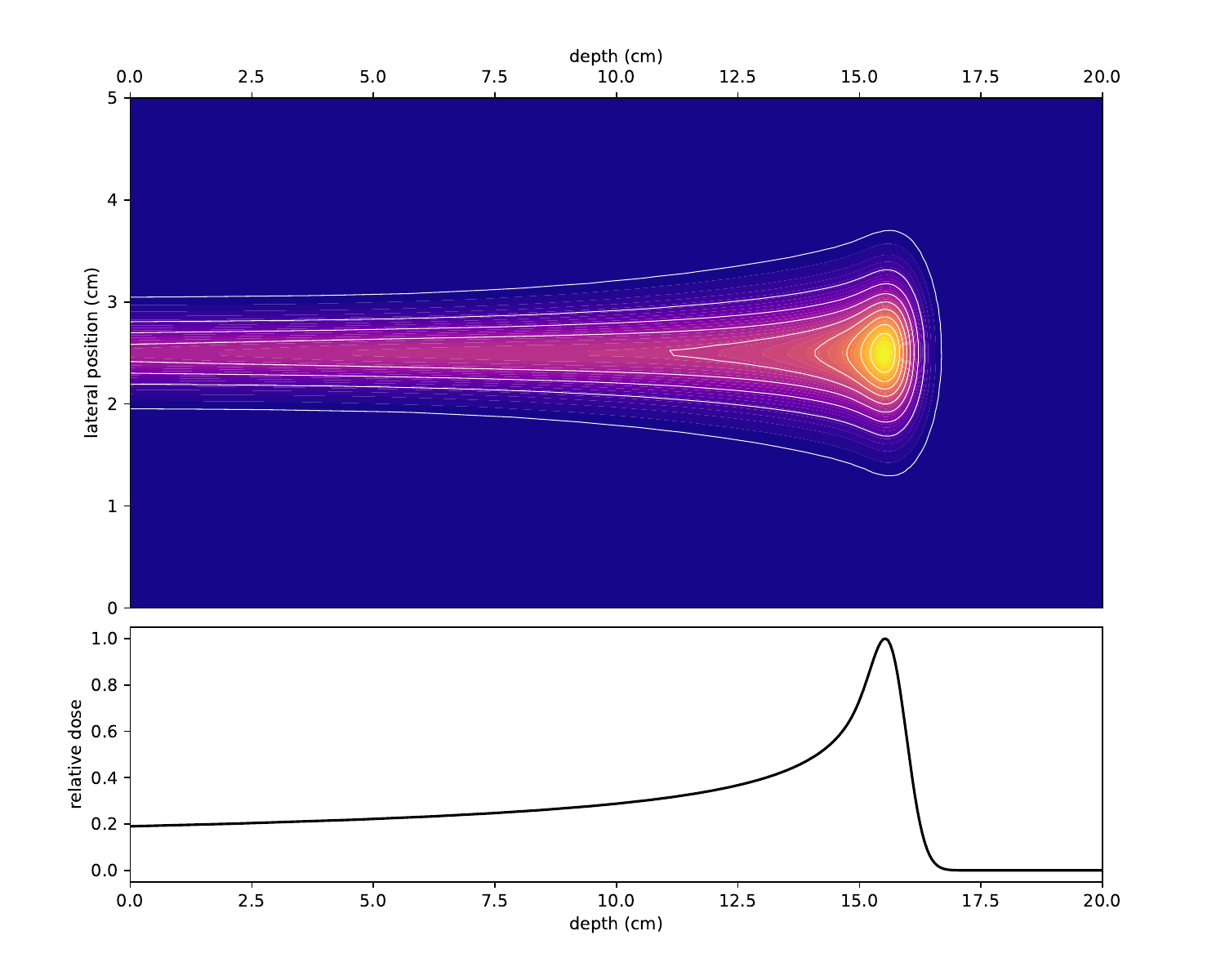}
  \captionof{figure}{
    \label{fig:BraggPeak}
    Simulated dose profile of a proton beam illustrating the Bragg
    peak. The simulation was performed using MCsquare with $1.21 \cdot
    10^7$ particles, a beam width of 2 mm, without nuclear
    interactions. The initial proton energy is 150 MeV with a 1\%
    energy spread, and the dose is integrated along the plane
    orthogonal to the beam axis. Figure from
    \cite{ashby2025efficient}.}
\end{wrapfigure}

The principal advantage of protons lies in their characteristic
\emph{Bragg peak}, a sharp energy deposition near the end of their
range, which contrasts with the more diffuse dose profile of
photons. This property enables precise dose shaping and better sparing
of critical structures, making proton therapy particularly well suited
for paediatric cases, head and neck cancers, and tumours adjacent to
radiosensitive organs \cite{thomas2020paediatric}.

Figure~\ref{fig:BraggPeak} shows a simulated proton dose profile
illustrating the Bragg peak. The simulation, performed using MCsquare
with $1.21 \cdot 10^7$ particles and a narrow beam width of 2 mm,
captures the distinctive energy deposition behaviour of 150 MeV
protons in tissue-equivalent material \cite{souris2016fast}.

Despite these advantages, the physics of proton transport is
complex. Accurate modelling must account for energy loss due to
ionisation, multiple Coulomb scattering, and non-elastic nuclear
reactions. Each of these effects introduces uncertainty and
nonlocality into the transport process, making predictive modelling a
formidable challenge. Two principal approaches have emerged in
response: deterministic models based on integro-differential transport
equations, and stochastic models based on Monte Carlo simulation of
individual particle tracks.

Deterministic approaches offer computational efficiency and analytic
structure \cite{ashby2025efficient}. They are typically formulated
using the Boltzmann or Boltzmann--Fokker--Planck equation and serve as
the foundation for many currently used clinical treatment planning
systems \cite{lin2017benchmarking}. However, they often rely on approximate
closures or empirical fits to handle angular scattering and nuclear
interactions \cite{pomraning1992fokker}. In contrast, stochastic
models offer high physical fidelity by directly simulating microscopic
interactions \cite{haghighat2020monte,lux2018monte}. Monte Carlo
methods are widely used for benchmarking and secondary dose
calculations, but can easily become too expensive for routine
optimisation or real-time adaptation
\cite{unkelbach2018robust,fredriksson2012characterization}.

This paper addresses the need for a rigorous mathematical connection
between deterministic and stochastic frameworks for proton
transport. This connection enables the development of hybrid
computational strategies that combine the physical accuracy of Monte
Carlo with the analytic structure and optimisation capabilities of
deterministic PDEs \cite{roos2008robust}. The resulting framework
supports not only accurate dose calculation but also efficient and
provably correct gradient-based treatment planning
\cite{paganetti2025proton}.

\subsection{Main Contributions}

This work develops a unified mathematical theory of proton transport
that links deterministic and stochastic formulations through
functional analysis, operator theory and stochastic calculus. Starting
from a physical description of energy loss and scattering, we
formulate the Boltzmann--Fokker--Planck equation in a variational
framework. We define the transport operator in a kinetic graph space
and establish monotonicity, maximality and the existence of well-posed
forward and adjoint problems. These results provide the foundation for
the description of deterministic PDE-based transport.

In parallel, we introduce a stochastic model of proton motion defined
via a system of stochastic differential equations (SDEs) indexed by
\emph{track length} rather than time. This model incorporates
continuous energy loss, angular diffusion on the sphere, and discrete
scattering events. We derive the infinitesimal generator of the
stochastic process and show that it corresponds to the backward
Boltzmann--Fokker--Planck operator.

Using semigroup theory and variational analysis, we prove that the
deterministic operator is the adjoint of the stochastic generator and
that the resolvent of the stochastic process coincides with the
Green’s function of the deterministic transport equation. This result
provides a rigorous mathematical basis for identifying the
\emph{fluence} in the deterministic setting with the \emph{expected
occupation density} in the stochastic setting.

We then apply this framework to the problem of dose computation. We
show that the classical expression
\begin{equation*}
  \text{dose} = \text{fluence} \times \text{mass stopping power}
\end{equation*}
emerges naturally and consistently from both deterministic and
stochastic perspectives. This unification allows us to express dose as
either a linear functional of the solution to the transport equation
or as the expected accumulated energy loss of a stochastic particle.

Finally, we apply these ideas to formulate a hybrid optimisation
strategy for proton therapy treatment planning. The optimisation
problem is posed over the beam intensity function on the inflow
boundary. The forward dose is computed using Monte Carlo simulations
of the SDE model, while gradients are computed efficiently via
deterministic adjoint PDEs. We prove existence of optimal controls
under realistic constraints, derive the first-order optimality
conditions in the form of a projected primal-dual system and propose a
numerical algorithm based on alternating stochastic simulation and
adjoint-based gradient updates.

\subsection{Relation to the Literature}

The mathematical modelling of particle transport has been the subject
of extensive work in both physics and applied
mathematics. Deterministic approaches include discrete ordinates
methods \cite{calloo2025cycle}, Fokker--Planck approximations and
moment-based closures, with clinical implementations often relying on
simplifying assumptions to reduce computational cost
\cite{wagner2011review,roos2008robust}. Stochastic methods, primarily
Monte Carlo, provide detailed physics and are used extensively for
verification and secondary dose estimates
\cite{haghighat2020monte,lux2018monte}. However, the computational
burden of Monte Carlo simulations poses challenges for real-time
optimisation settings
\cite{unkelbach2018robust,fredriksson2012characterization}.

Several works have attempted to introduce adjoint methods into Monte
Carlo frameworks, using techniques such as track reversal or
score-function estimation
\cite{difilippo1996adjoint,lux2018monte}. These approaches allow for
sensitivity analysis but typically require specialised sampling
techniques and are sensitive to variance. In contrast, our approach
leverages the duality between stochastic generators and deterministic
PDEs to obtain gradients via deterministic adjoint equations, while
retaining the physical accuracy of Monte Carlo for forward dose
computation. Moving forward, we believe this hybrid formulation will
enable efficient and robust real-time treatment planning strategies
\cite{paganetti2025proton,georgiou2025scotty}.

The mathematical connection between deterministic and stochastic
transport is well known in kinetic theory but has not been fully
developed in the context of proton therapy. Our work builds on ideas
from operator theory, semigroup analysis and stochastic calculus to
establish this connection rigorously, with direct implications for
computational radiotherapy
\cite{oksendal2013stochastic,pavliotis2014stochastic,lions1971optimal}.

The remainder of the paper is structured as
follows. Section~\ref{sec:physics} introduces the physical processes
governing proton transport, including energy loss and
scattering. Section~\ref{sec:deterministic} formulates the
deterministic transport equation in a variational framework and
establishes the properties of the forward and adjoint transport
operators. Section~\ref{sec:stochastic} develops the stochastic SDE
model and derives its generator and associated Kolmogorov
equations. Section~\ref{sec:resolvent} shows the duality between
stochastic and deterministic operators and constructing the
corresponding resolvents provides a unified theory of dose computation
in both settings. Section~\ref{sec:treatment} formulates the treatment
planning problem as a PDE-constrained stochastic optimisation problem
and proposes a hybrid algorithm. Section~\ref{sec:conclusion}
summarises the results and discusses future directions.

\section{Physics of Proton Transport}
\label{sec:physics}

To mathematically describe the evolution of protons, we work in the
spatial-direction-energy phase space. Let $\vec{x} \in D \subset
\mathbb{R}^3$ denote a position in a closed, bounded spatial domain,
$\omega \in \mathbb{S}^2$ a unit vector describing the direction of
motion, and $E \in \mathcal{E} := (E_{\min}, E_{\max}) \subset (0,
\infty)$ an admissible energy. The phase space is then given by
\begin{equation}
  \mathcal{C} = D \times \mathbb{S}^2 \times \mathcal{E}.
\end{equation}
Elements of $\mathcal{C}$ will typically be written as $X = (\vec{x},
\omega, E)$; that is, capital letters denote points in $\mathcal{C}$,
while $\vec{x} \in D$, $\omega \in \mathbb{S}^2$, and $E \in
\mathcal{E}$. Variants of these dummy variables will be used
throughout, but the notation will consistently distinguish spaces by
font and capitalisation. We also denote by $\vec{n}$ the
outward-pointing unit normal on the spatial boundary $\partial D$.

Protons interact with the medium primarily through three mechanisms:
inelastic collisions with atomic electrons, elastic Coulomb scattering
with nuclei, and non-elastic nuclear reactions (see
Figure~\ref{fig:atom}). These interactions are characterised by cross
sections, which describe the probability of an interaction occurring
per unit length travelled. They form the basis for scattering kernels,
stopping power functions, and reaction terms in the governing
transport equations.
\begin{figure}[h!]
    \centering
    \includegraphics[width=0.5\textwidth]{./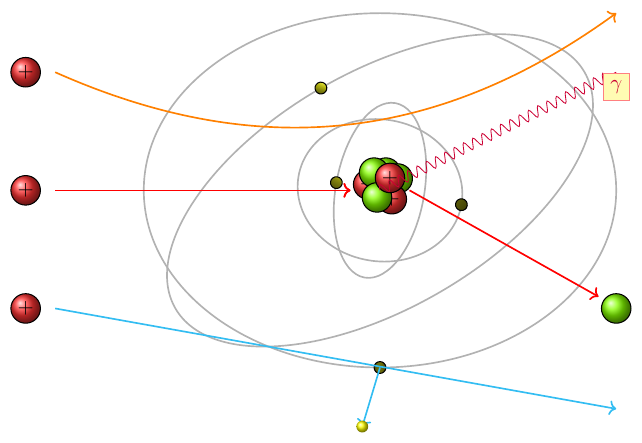}
    \caption{\em
      The three main interactions of a proton with matter. A
    \textcolor{red}{nonelastic} proton-nucleus collision, an
    \textcolor{cyan!80!white}{inelastic} Coulomb interaction with atomic
    electrons and \textcolor{orange}{elastic} Coulomb scattering with
    the nucleus.
    \label{fig:atom}
    }
\end{figure}

\subsection{Transport Processes in Proton Therapy}
\label{subsec:transport-processes}

A proton traversing a medium undergoes multiple interactions that
influence its trajectory, energy and ultimately its dose deposition
profile. In the absence of interactions, a proton moves in a straight
line with constant kinetic energy $E$. This free-streaming motion is
referred to as \emph{transport}.

As a proton propagates through an atomic medium, it experiences
collisional interactions with orbital electrons, transferring energy
while largely maintaining its direction. This process, known as
\emph{inelastic electron--proton interaction}, is the primary
mechanism of continuous energy loss and can lead to ionisation or
excitation of electrons. The frequency of such events is characterised
by the cross section $\sigma_i(\vec{x}, E' \to E)$, where $E'$ is the
proton energy prior to the interaction and $E' - E$ is the energy
lost.

Protons are also subject to small-angle deflections resulting from
elastic Coulomb interactions with nuclei. This \emph{elastic
nucleus--proton interaction} preserves total kinetic energy but
redistributes it between the proton and the nucleus, producing a net
energy loss over many such events. The angular deflection is described
by the cross section $\sigma_e(\vec{x}, \omega' \cdot \omega, E')$,
where $\omega' \cdot \omega$ is the cosine of the scattering angle and
$E'$ is the pre-collision energy.

At higher energies, a proton may undergo a non-elastic nuclear
reaction, producing secondary particles and incurring a discrete loss
of energy. This \emph{non-elastic proton--nucleus interaction} occurs
with probability per unit path length given by the cross section
$\sigma_{\mathrm{n}}(\vec{x}, \omega, E)$. Conditional on such an event, the
proton transitions from an incoming state $(\vec{x}, \omega', E')$ to
an outgoing state $(\vec{x}, \omega, E)$ with probability measure
$\pi(\vec{x}, \omega', E'; \dd\omega, \dd E)$.

\subsection{Energy, Range and Stopping Power}
\label{sec:stopping}

The range-energy relationship provides an estimate of how far protons
of a given energy can penetrate into a homogeneous medium. Suppose a
proton beam consists of particles with initial energy $E_0$. The
corresponding range $R_0$ is often modelled by a power law
\begin{equation}
  R_0 = \alpha E_0^p,
  \label{eq:range_energy}
\end{equation}
where $p \in [1,2]$ and $\alpha > 0$ are empirical constants
determined by the mass density and composition of the
medium. Indicative values for different biological tissues are given
in Table~\ref{tab:range_energy}.

\begin{table}[h!]
  \centering
  \begin{tabular}{lrr}
    \toprule
    Medium & \( p \) & \( \alpha \) \\
    \midrule
    Water & 1.75$\pm$0.02 & 0.00246$\pm$0.00025 \\
    Muscle & 1.75 & 0.0021 \\
    Bone & 1.77 & 0.0011 \\
    Lung & 1.74 & 0.0033 \\
    \bottomrule
  \end{tabular}
  \caption{
    \label{tab:range_energy}
    Range-energy relationship parameters for different media. The
    exponent $p$ remains relatively constant across tissues, while
    $\alpha$ varies significantly with density and composition. The
    uncertainty for water reflects variability reported in
    \cite{pettersen2018accuracy, boon1998dosimetry,
      bortfeld1997analytical}.}
\end{table}

Let $z$ denote the depth along the proton beam axis. At a given depth
$z$, the remaining range of the proton beam is $R_0 - z$, and applying
\eqref{eq:range_energy} to this residual range yields
\begin{equation}
  E(z) = \alpha^{-\frac{1}{p}} \left( R_0 - z \right)^{\frac{1}{p}}.
  \label{eq:E_of_x}
\end{equation}
The \emph{stopping power}, defined as the energy loss per unit path
length, follows by differentiation
\begin{equation}
  S(z) := -\frac{\d E(z)}{\d z}
  =
  \frac{\alpha^{-\frac{1}{p}}}{p} \left( R_0 - z \right)^{\frac{1}{p} - 1}.
\end{equation}
Inverting \eqref{eq:E_of_x} to express $z$ in terms of $E$ and
substituting into $S(z)$ yields the \emph{Bragg--Kleeman rule}
\begin{equation}
  S(E) = \frac{1}{\alpha p} E^{1-p},
  \label{eq:BraggKleeman}
\end{equation}
which shows how stopping power decreases with increasing energy,
giving rise to the characteristic Bragg peak near the end of the
range.

\begin{remark}[Heterogeneous Tissue Effects]
  The range-energy relationship assumes a homogeneous medium, such as
  a water phantom, where $\alpha$ and $p$ are constant. In
  heterogeneous tissues, the mass density and elemental composition
  vary with position, leading to spatial variations in $\alpha$ and
  discontinuities in the stopping power across tissue interfaces. In
  such cases, the range-energy relationship must be defined piecewise,
  with parameters adjusted for each region. This framework underlies
  practical models that incorporate CT-derived maps of tissue
  composition \cite{cox2024bayesian}.
\end{remark}

\section{Deterministic Formulation: The PDE Approach}
\label{sec:deterministic}

\subsection{Functional Setup for Proton Transport}
\label{subsec:functional-setup}

The mathematical analysis of the proton transport equation requires a
precise functional analytic framework. We adopt standard Sobolev and
Lebesgue space notations, following \cite{Evans:1998}.

For a Hilbert space $\mathcal{X} = \mathcal{X}(\mathcal{C})$ over the
phase space $\mathcal{C} = D \times \mathbb{S}^2 \times \mathcal{E}$,
we write $\| \cdot \|_{\mathcal{X}}$ and
$\ltwop{\cdot}{\cdot}_{\mathcal{X}}$ for the norm and inner product,
respectively. We denote the dual of $\mathcal{X}$ by $\mathcal{X}^*$,
and write $\ltwop{\cdot}{\cdot}_{\mathcal{X}(\mathcal{C})}$ for the
duality pairing between $\mathcal{X}^*$ and $\mathcal{X}$ when the
context is clear. Where needed for clarity, we write
$\ltwop{\cdot}{\cdot}_{\mathcal{X}, \mathcal{X}^*}$.

To set up the transport equation with boundary conditions, we first
introduce function spaces adapted to the phase-space boundary. The
spatial boundary of $\mathcal{C}$ is given by
\begin{equation}
  \Gamma = \partial D \times \mathbb{S}^2 \times \mathcal{E},
\end{equation}
where $\vec{n}$ denotes the outward-pointing unit normal to $\partial
D$. The outgoing  and incoming parts of the physical-energy boundary
are defined  as
\begin{equation}
  \begin{split}
    \Gamma^+
    &:=
    \{ (\vec{x}, \omega, E) \in \Gamma \mid \omega \cdot \vec{n} \geq 0 \text{ or } E = E_{\min} \}, \\
    \Gamma^-
    &:=
    \{ (\vec{x}, \omega, E) \in \Gamma \mid \omega \cdot \vec{n} < 0 \text{ or } E = E_{\max} \},
  \end{split}
\end{equation}
corresponding to the outflow and inflow boundaries, respectively, where we recall $\vec{n}$ the
outward-pointing unit normal on the spatial boundary $\partial D$.

The spaces $\leb{2}(\Gamma^{+})$ and $\leb{2}(\Gamma^{-})$ represent
square-integrable boundary traces weighted by the normal component of
the velocity, and are defined by
\begin{equation}
  \leb{2}(\Gamma^{\pm}) :=
  \left\{
    \phi : \Gamma^{\pm} \to \mathbb{R}
    \,\middle|\,
    \int_{\Gamma^{\pm}} \phi^2 |\omega \cdot \vec{n}| \, \dd \gamma < \infty
  \right\},
\end{equation}
where $\dd \gamma$ denotes the product of the surface measure on
$\partial D$, the uniform surface measure on $\mathbb{S}^2$, and the
Lebesgue measure on $\mathcal{E}$. These are Hilbert spaces under the
weighted inner product
\begin{equation}
  \ltwop{\phi_1}{\phi_2}_{\Gamma^{\pm}} :=
  \int_{\Gamma^{\pm}} \phi_1 \phi_2 |\omega \cdot \vec{n}| \, \dd \gamma.
\end{equation}

\subsection{The Boltzmann--Fokker--Planck Operator}

The full Boltzmann--Fokker--Planck (BFP) operator governing proton
transport is denoted by $\fL$ and given by
\begin{equation}
  \fL \phi := \fT \phi - \mu \Delta_{\omega} \phi,
\end{equation}
where $\mu > 0$ is the angular diffusion coefficient and the transport
operator $\fT$ is defined by
\begin{equation}
  \fT \phi := \omega \cdot \nabla_{\vec{x}} \phi 
  - \partial_E (S \phi) 
  - \cS \phi + \sigma_{\mathrm{n}} \phi.
\end{equation}
Here, $S$ is the stopping power from \eqref{eq:BraggKleeman}, and
$\cS$ is the scattering operator describing angular and energy
redistribution due to collisions.

We define $\cS$ in terms of elastic and non-elastic components as
\begin{align}
  \cS \phi(\vec{x}, \omega, E) 
  &=
  \int_{\mathbb{S}^2}\int_{\mathcal{E}} \phi(\vec{x}, \omega', E') \, \sigma_{\mathrm{n}}(\vec{x}, \omega', E') \,
  \pi(\vec{x}, \omega', E';  \omega,  E)\dd \omega' \dd E',
  \label{eq:bigS_def}
\end{align}
where the total scattering cross section $\sigma_{\mathrm{n}}$ is decomposed into
elastic and non-elastic parts
\begin{equation}
  \label{eq:bigS}
  \sigma_{\mathrm{n}} = \sigma_{\mathrm{e}} + \sigma_{\mathrm{ne}}.
\end{equation}
The  kernel $\pi$ defining post-collision transitions is then
\begin{equation}
  \pi(\vec{x}, \omega', E';  \omega,  E)
  =
  \frac{\sigma_{\mathrm{e}}(\vec{x}, \omega', E')}{\sigma_{\mathrm{n}}(\vec{x}, \omega', E')}
  \pi_{\mathrm{e}}(\vec{x}, \omega', E';  \omega) \, \delta_{E'}( E)
  +
  \frac{\sigma_{\mathrm{ne}}(\vec{x}, \omega', E')}{\sigma_{\mathrm{n}}(\vec{x}, \omega', E')}
  \pi_{\mathrm{ne}}(\vec{x}, \omega', E';  \omega,  E),
  \label{eq:pi_mixture_kernel}
\end{equation}
where $\delta_{E'}$ denotes the Dirac density at energy $E'$. 
In this decomposition, $\sigma_{\mathrm{e}}(\vec{x}, \omega', E')$ and
$\pi_{\mathrm{e}}(\vec{x}, \omega', E'; \omega)$ represent the elastic
cross section and angular kernel from $(\vec{x}, \omega', E')$ to
$(\vec{x}, \omega, E)$, while $\sigma_{\mathrm{ne}}(\vec{x}, \omega',
E')$ and $\pi_{\mathrm{ne}}(\vec{x}, \omega', E'; \omega, E)$ model
non-elastic scattering events with energy redistribution.
The 
mixture kernel \eqref{eq:pi_mixture_kernel} defines a probability distribution over post-scattering
states $(\omega, E)$, conditioned on the incoming state $(\vec{x},
\omega', E')$.

In particular, we require that
\begin{equation}
  \int_{\mathbb{S}^2} \int_{\mathcal{E}} 
  \pi(\vec{x}, \omega', E';  \omega,  E)\dd \omega\dd E = 1,
  \qquad \text{for all } (\vec{x}, \omega', E') \in \mathcal{C},
  \label{eq:pi_is_prob_dist}
\end{equation}
ensuring that $\cS$ is a positive contraction.

\subsection{Functional Spaces and Operator Domains}

We define the kinetic graph space
\begin{equation}
  \cH
  :=
  \left\{
  \phi \in \leb2(\mathcal{C})
  ~\middle|~
  \nabla_\omega \phi \in \leb2(\mathcal{C})^3, \quad
  \fT \phi \in \leb2(\mathcal{C})
  \right\},
\end{equation}
equipped with the graph norm
\begin{equation}
  \Norm{\phi}_{\cH}^2
  :=
  \Norm{\phi}_{\leb2(\mathcal{C})}^2
  + \Norm{\sqrt{\mu} \nabla_\omega \phi}_{\leb2(\mathcal{C})}^2
  + \Norm{\fT \phi}_{\leb2(\mathcal{C})}^2.
\end{equation}
We define the subspace with homogeneous inflow boundary conditions as
\begin{equation}
  \cH_0^-
  :=
  \left\{
  \phi \in \cH ~\middle|~
  \phi = 0 \text{ on } \Gamma^-
  \right\},
\end{equation}
corresponding to physical inflow constraints for the forward transport
problem.

To define the range space of the BFP operator $\fL$, we introduce
\begin{equation}
  \cV := \leb2(D \times \mathcal{E}; H^{-1}(\mathbb{S}^2)),
\end{equation}
which encodes $\leb2$ regularity in position and energy, and dual
Sobolev regularity in angle. This space is appropriate for the weak
formulation of the degenerate elliptic transport equation.

Note that for its dual $\cV^*$, we have continuous embeddings $\cH,
\cH_0^- \subset \cV^*$. We write $\ltwop{\cdot}{\cdot}$ to denote the
duality pairing between $\cV$ and $\cV^*$, omitting explicit mention
of the spaces when context permits.

The natural boundary conditions for proton transport arise from
specifying the incoming flux on the inflow boundary $\Gamma^-$. For
solutions in $\cH$, we consider the boundary value problem
\begin{equation}
  \label{eq:BTE}
  \fL \psi = 0 \quad \text{in } \mathcal{C}, \qquad \psi = g \quad \text{on } \Gamma^-,
\end{equation}
where $g$ is a prescribed inflow intensity. In the stochastic
formulation, this corresponds to the \emph{entry distribution} of
particles, specifying the initial sampling law. At high energies, we
assume decay of the solution as $E \to E_{\max}$, which reflects
physical attenuation.

\subsection{The Backward Boltzmann--Fokker--Planck Operator}

We define the functional analytic setting for the backward
Boltzmann--Fokker--Planck (bBFP) operator. The domain of the backward
transport operator is
\begin{equation}
    \cbH 
    := 
    \left\{
    \phi \in \leb2(\mathcal{C})
    ~\middle|~
    \bT \phi \in \leb2(\mathcal{C}),\ 
    \nabla_{\omega} \phi \in \leb2(\mathcal{C})^3
    \right\},
\end{equation}
with graph norm
\begin{equation}
    \Norm{\phi}_{\cbH}^2
    :=
    \Norm{\phi}^2_{\leb2(\mathcal{C})} 
    + \Norm{\sqrt{\mu} \nabla_{\omega} \phi}^2_{\leb2(\mathcal{C})}
    + \Norm{\bT \phi}^2_{\leb2(\mathcal{C})}.
\end{equation}

We also define the subspace of functions vanishing on the outflow
(i.e., terminal) boundary,
\begin{equation}
    \cbH_0^+ :=
    \left\{
    \phi \in \cbH ~\middle|~
    \phi = 0 \text{ on } \Gamma^+
    \right\}.
\end{equation}
This space enforces terminal conditions on the physical and energy
outflow boundaries, corresponding to the vanishing of expected future
contributions in the adjoint formulation.

The bBFP operator is defined formally as the dual of the forward
operator $\fL$ with respect to the $\leb2(\mathcal{C})$ inner product
\begin{equation}
  \ltwop{\fL \phi}{\psi} = \ltwop{\phi}{\bL \psi}
  \quad
  \text{for all } \phi \in \cH_0^-,~ \psi \in \cbH_0^+.
  \label{dualinnerproduct}
\end{equation}

\begin{theorem}[Adjoint representation of the Boltzmann–Fokker–Planck operator]
  It follows from the definition \eqref{dualinnerproduct}
  that
  \begin{equation}\label{backwards_BFP} \bL \phi := \bT \phi -
    \mu \Delta_{\omega} \phi,
  \end{equation}
  where the adjoint transport operator $\bT$ is
  \begin{equation}
    \bT \phi := - \omega \cdot \nabla_{\vec x} \phi 
    + S \partial_E \phi 
    - \cS^* \phi + \sigma_{\mathrm{n}} \phi;
  \end{equation}
  and the adjoint scattering operator $\cS^*$ is given by
  \begin{equation}
    \cS^* \psi(\vec x,\omega,E) := \sigma_{\mathrm{n}} (\vec x,\omega,E)\int_{\mathbb{S}^2} \int_{\mathcal{E}}
    \psi(\vec x, \omega', E') \pi(\vec x,\omega, E;  \omega',   E')\dd \omega'  \dd E',
  \end{equation}
  corresponding to a reversal of pre- and post-collision
  roles in the scattering process.
\end{theorem}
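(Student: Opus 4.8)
The plan is to verify that the proposed operator $\bL$ satisfies the defining duality \eqref{dualinnerproduct} by expanding the pairing $\ltwop{\fL\phi}{\psi}$ into its five constituent contributions and integrating by parts in each phase-space variable separately. Writing $\fL\phi = \omega\cdot\nabla_{\vec x}\phi - \partial_E(S\phi) - \cS\phi + \sigma_{\mathrm n}\phi - \mu\Delta_{\omega}\phi$, I would treat each term independently, since the integral over $\mathcal C$ factorises into iterated integrals over $D$, $\mathbb S^2$ and $\mathcal E$ against the product measure $\dd\gamma$.

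The two ``easy'' terms are handled by symmetry. For the angular diffusion term, $\mathbb S^2$ is a closed manifold without boundary, so Green's identity gives $\ltwop{-\mu\Delta_{\omega}\phi}{\psi} = \ltwop{\phi}{-\mu\Delta_{\omega}\psi}$ with no boundary contribution, and the term is self-adjoint. The multiplication term $\sigma_{\mathrm n}\phi$ is self-adjoint because $\sigma_{\mathrm n}$ is a real-valued multiplier. For the scattering term I would apply Fubini's theorem to $\ltwop{\cS\phi}{\psi}$ and relabel $(\omega,E)\leftrightarrow(\omega',E')$; this exchanges the pre- and post-collision arguments of the kernel $\pi$ and produces exactly the stated $\cS^*$.

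The two transport terms require integration by parts and generate boundary contributions that must be shown to vanish. For spatial advection, integrating by parts in $\vec x$ yields $-\ltwop{\phi}{\omega\cdot\nabla_{\vec x}\psi}$ plus the boundary integral $\int_\Gamma (\omega\cdot\vec n)\phi\psi\,\dd\gamma$. Splitting $\Gamma$ by the sign of $\omega\cdot\vec n$, the part with $\omega\cdot\vec n\geq 0$ lies in $\Gamma^+$, where $\psi=0$, while the part with $\omega\cdot\vec n<0$ lies in $\Gamma^-$, where $\phi=0$, so the boundary integral vanishes. Similarly, integrating $-\partial_E(S\phi)$ by parts in $E$ yields $\ltwop{\phi}{S\partial_E\psi}$ plus the endpoint term $-[S\phi\psi]_{E_{\min}}^{E_{\max}}$; since $E=E_{\min}\in\Gamma^+$ forces $\psi=0$ and $E=E_{\max}\in\Gamma^-$ forces $\phi=0$, both endpoint contributions vanish. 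Collecting the five results gives $\ltwop{\fL\phi}{\psi}=\ltwop{\phi}{\bL\psi}$ with $\bL=\bT-\mu\Delta_{\omega}$ in precisely the claimed form, and uniqueness of the adjoint identifies $\bL$.

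The main obstacle is making these integration-by-parts identities rigorous in the graph spaces $\cH_0^-$ and $\cbH_0^+$: the formal manipulations presuppose enough regularity for Green's formula and for the boundary traces on $\Gamma^\pm$ to be well defined in $\leb2(\Gamma^\pm)$. The careful part is therefore establishing a trace theorem together with density of smooth functions, so that the boundary terms are meaningful and the cancellations using the homogeneous conditions on $\Gamma^-$ and $\Gamma^+$ are legitimate. Once this functional-analytic scaffolding is in place, the remaining bookkeeping, matching $\omega\cdot\vec n$ and the energy endpoints to the definitions of $\Gamma^\pm$, is routine.
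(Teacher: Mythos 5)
Your proposal is correct and follows essentially the same route as the paper, which simply states that the duality relation follows from formal integration by parts with boundary terms vanishing due to the support properties of $\phi$ and $\psi$ (citing an external reference for the rigorous version). You carry out the term-by-term computation and the $\Gamma^\pm$ cancellations explicitly, and correctly flag the trace/density issues as the point where rigour is needed, which is more detail than the paper itself provides but not a different argument.
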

\begin{proof}
  The duality relation \eqref{dualinnerproduct} follows from formal
  integration by parts, under the assumption that boundary terms
  vanish due to the support properties of $\phi$ and $\psi$. Similar
  computations are carried out rigorously in, e.g., \cite{DL6}.
\end{proof}

%

\subsection{Admissible Inflow Boundary Conditions}

The space of admissible inflow boundary data consists of functions in
$\leb2(\Gamma^-)$ that arise as traces of elements in $\cH$, the
forward graph space. We define the trace space as
\begin{equation}
    {\rm{tr}}(\Gamma^-) := \left\{
    g \in \leb2(\Gamma^-)
    ~\middle|~
    \exists G \in \cH \text{ such that } \gamma^- (G) = g
    \right\},
\end{equation}
where $\gamma^-$ denotes the trace operator on $\Gamma^-$. This space
ensures compatibility between the boundary data and the variational
formulation of the forward problem. In the stochastic formulation,
${\rm tr}(\Gamma^-)$ corresponds to the set of admissible entry
distributions for particles entering the phase space.

\subsection{Reformulation as an Interior Problem}

The natural variational space for the forward problem is $\cH_0^-$,
comprising functions that vanish on the inflow boundary
$\Gamma^-$. However, the boundary value problem
\begin{equation}
  \label{eq:BFPE}
  \fL \psi = 0 \quad \text{in } \mathcal{C}, \qquad \psi = g \quad \text{on } \Gamma^-
\end{equation}
prescribes inhomogeneous boundary data $g$ on $\Gamma^-$. To express
this within the variational framework, we introduce a \emph{lifting
function} $\widehat g \in \cH$ such that $\gamma^- (\widehat g) = g$. Define the
shifted unknown
\begin{equation}\label{eq:extension_of_g}
  u := \psi - \widehat g,
\end{equation}
so that $u \in \cH_0^-$ satisfies homogeneous inflow boundary
conditions.

Substituting into the equation, we find that $u$ satisfies the
inhomogeneous problem
\begin{equation}\label{eq:extension_of_g_problem}
  \fL u = -\fL \widehat g
  :=
  f \quad \text{in } \mathcal{C}, \qquad u = 0 \quad \text{on } \Gamma^-.
\end{equation}
This allows us to recast the original boundary value problem as a
variational equation: find $u \in \cH_0^-$ such that
\begin{equation}
  \ltwop{\fL u}{\phi} = \ltwop{f}{\phi} = -\ltwop{\fL \widehat g}{\phi}, \qquad \forall \phi \in \cH_0^-.
  \label{eq:BFPE-lifted}
\end{equation}

This formulation is well defined provided that $\widehat g \in \cH$ and
$\fL \widehat g \in \cV^*$. The lifting $\widehat g$ can be constructed
by solving an auxiliary extension problem or interpolating $g$ into a
function in $\cH$, cf.~\cite[Section~5.4]{Evans:1998}.

In the probabilistic framework, this lifting corresponds to
decomposing the solution $\psi$ into two parts: a component $\widehat
g$ that encodes the entry distribution of particles on $\Gamma^-$, and
a remainder $u$ that captures the contribution of those trajectories
that evolve within the domain after entering. Since the process is
absorbed at the outflow boundary $\Gamma^+$ or at the minimal energy
$E_{\min}$, the function $u$ describes the internal evolution of
particles conditioned on entering at {$\Gamma^-$}, the entrance
boundary, and vanishes on the {exit boundary $\Gamma^+$}.

To establish well-posedness of \eqref{eq:BFPE} (and hence of the
original boundary value problem), we show that the operator $\fL$ is
both \emph{monotone} and \emph{maximal}
\cite[§7.1]{brezis2011functional}. Under these conditions, existence
and uniqueness of a solution $u \in \cH_0^-$ follow, and $\cH_0^-$
inherits a Hilbert space structure with inner product defined
analogously to its graph norm \cite[Section~6.3]{Ern_Guermond_1}. As
we will see, in the stochastic setting, maximality of $\fL$
corresponds to well-posedness of the resolvent problem for $\bL$,
ensuring that the backward process is defined throughout the entire
phase space.

\begin{lemma}[Monotonicity of the BFP Operator]
  \label{lemma:monotone}
  Assume that the angular diffusion coefficient $\mu \in
  L^{\infty}(D \times \mathcal{E}; \mathbb{R}^+)$, and that the
  stopping power $S \in W^{1,\infty}(D \times \mathcal{E})$
  satisfies
  \begin{equation}
    \lim_{E \downarrow E_{\min}} S(\vec x, E) \geq 0,~ \forall \vec x \in D, 
    \qquad 
    \partial_E S(\vec x, E) \leq 0,~ \forall (\vec x, E) \in D \times \mathcal{E}.
  \end{equation}
  Additionally, suppose that the nuclear scattering cross-section satisfies
  $\sigma_{\mathrm{n}}(\vec{x}, \omega, E) \geq 0$, and that the scattering kernel
  $\pi$ satisfies the normalisation condition \eqref{eq:pi_is_prob_dist}.
  Then, there exist constants $C_{\rightarrow}, C_{\leftarrow} > 0$ such that the
  Boltzmann--Fokker--Planck operator $\fL$ and its adjoint $\bL$ are
  monotone in the sense that
  \begin{equation}
    \begin{aligned}
        \ltwop{\fL u}{u} 
        &\geq
        C_{\rightarrow} \left(\|u\|^2_{\Gamma^+} 
        + \|\sqrt{\mu}\nabla_{\omega}u\|^2_{\leb2(\mathcal{C})} 
        + \|u\|^2_{\leb2(\mathcal{C})} 
        + \lim_{E \downarrow E_{\min}} \|\sqrt{S}u\|^2_{\leb2(D\times \mathbb{S}^2)} \right),
        \quad \forall u \in \cH_0^-, 
    \\ 
        \ltwop{\bL \phi}{\phi} 
        &\geq 
        C_{\leftarrow} \left(\|\phi\|^2_{\Gamma^-} 
        + \|\sqrt{\mu}\nabla_{\omega}\phi\|^2_{\leb2(\mathcal{C})} 
        + \|\phi\|^2_{\leb2(\mathcal{C})} 
        + \lim_{E \uparrow E_{\max}} \|\sqrt{S}\phi\|^2_{\leb2(D\times \mathbb{S}^2)} \right), 
        \quad \forall \phi \in \cbH_0^+.
    \end{aligned}
  \end{equation}
\end{lemma}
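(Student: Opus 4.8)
The plan is to prove both estimates by the energy method: test each operator against the function itself, integrate by parts term by term, and track the signs of the resulting boundary and interior contributions. I would carry out the forward case $\ltwop{\fL u}{u}$ in full and obtain the backward case by the mirror-image computation, in which the inflow boundary $\Gamma^-$ and outflow boundary $\Gamma^+$ swap roles, the energy endpoints $E_{\min}$ and $E_{\max}$ are interchanged, and $\cS^*$ replaces $\cS$.

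Writing $\fL u = \omega\cdot\nabla_{\vec x}u-\partial_E(Su)-\cS u+\sigma_{\mathrm n}u-\mu\Delta_\omega u$, I treat the five contributions in turn. For the streaming term, integration by parts in $\vec x$ produces $\tfrac12\int_\Gamma(\omega\cdot\vec n)u^2\,\dd\gamma$; since $u\in\cH_0^-$ vanishes where $\omega\cdot\vec n<0$, only the outflow part survives and yields $\tfrac12\Norm{u}_{\Gamma^+}^2$. For the energy term, integration by parts in $E$ gives the interior term $-\tfrac12\int(\partial_E S)u^2$, nonnegative because $\partial_E S\le0$, together with an endpoint contribution: the $E_{\max}$ endpoint vanishes since $E=E_{\max}$ lies in $\Gamma^-$ where $u=0$, while the $E_{\min}$ endpoint contributes $\tfrac12\lim_{E\downarrow E_{\min}}\Norm{\sqrt S u}_{\leb2(D\times\mathbb S^2)}^2\ge0$ by the hypothesis $\lim_{E\downarrow E_{\min}}S\ge0$. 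The angular term, integrated by parts over the closed manifold $\mathbb S^2$ with $\mu$ independent of $\omega$, contributes exactly $\Norm{\sqrt\mu\,\nabla_\omega u}_{\leb2(\mathcal C)}^2$.

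The scattering and absorption terms $-\cS u$ and $\sigma_{\mathrm n}u$ must be handled together, and this is the crux of the argument. Applying Young's inequality to $\int(\cS u)u$ and using the normalisation \eqref{eq:pi_is_prob_dist} of $\pi$ over the post-collision variables, I expect the bound $\int(\cS u)u\le\tfrac12\int\sigma_{\mathrm n}u^2+\tfrac12\int\tilde\sigma\,u^2$, where $\tilde\sigma$ is the in-scattering rate obtained by integrating $\sigma_{\mathrm n}\pi$ against the pre-collision variables; the pair then contributes $\tfrac12\int(\sigma_{\mathrm n}-\tilde\sigma)u^2$. To close the estimate I need this to be nonnegative, which follows from the net absorption being nonnegative (the sub-criticality encoded in $\cS$ being a positive contraction), and I need to recover the stand-alone term $\Norm{u}_{\leb2(\mathcal C)}^2$. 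The latter does not follow from the degenerate-elliptic structure alone; I would extract it either from a strict positive lower bound on $\sigma_{\mathrm n}-\tilde\sigma$ or, using the Bragg--Kleeman law \eqref{eq:BraggKleeman} with $p>1$ on the bounded energy window, from the uniform bound $-\partial_E S\ge(p-1)(\alpha p)^{-1}E_{\max}^{-p}>0$, which upgrades the interior energy term to $c_0\Norm{u}_{\leb2(\mathcal C)}^2$.

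Collecting the five contributions and choosing $C_{\rightarrow}$ to be the minimum of the resulting coefficients gives the forward estimate, and the backward estimate follows by the symmetric computation on $\cbH_0^+$, with the streaming boundary term now supported on $\Gamma^-$ and the surviving energy endpoint at $E_{\max}$. I expect the main obstacle to be precisely the scattering term: simultaneously dominating the in-scattering rate $\tilde\sigma$ by $\sigma_{\mathrm n}$ and extracting genuine $\leb2(\mathcal C)$ coercivity rather than mere nonnegativity, since the bare hypotheses $\partial_E S\le0$ and $\sigma_{\mathrm n}\ge0$ yield only monotonicity up to a lower-order $\leb2$ term unless a strict sign is available from the physical model.
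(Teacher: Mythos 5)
Your proposal follows the same route as the paper's proof: test the operator against the function itself, integrate by parts in space, energy and angle, use the vanishing trace on $\Gamma^-$ (resp.\ $\Gamma^+$) so that only the outflow boundary term survives, use $\partial_E S\le 0$ and the sign of $S$ at the energy endpoint for the energy contribution, and treat $\sigma_{\mathrm n}-\cS$ as a nonnegative perturbation. The one place you diverge is also the one place you are more careful than the paper. The paper simply asserts $\ltwop{(\sigma_{\mathrm n} - \cS)v}{v}\ge 0$ ``by the positivity and contraction property of $\cS$'', whereas your Young-inequality computation makes explicit that this requires the in-scattering rate $\tilde\sigma$, obtained by integrating $\sigma_{\mathrm n}\pi$ over the pre-collision variables, to be dominated pointwise by $\sigma_{\mathrm n}$ --- a condition that does not follow from the stated normalisation \eqref{eq:pi_is_prob_dist}, which integrates $\pi$ over the post-collision variables only. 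Likewise, the stand-alone term $\Norm{u}^2_{\leb2(\mathcal C)}$ with a strictly positive constant $C_{\rightarrow}$ appears in the statement but is never actually produced in the paper's proof, where each contribution is only shown to be nonnegative; your proposed repairs (a strict gap $\sigma_{\mathrm n}-\tilde\sigma\ge c>0$, or the uniform bound $-\partial_E S\ge (p-1)(\alpha p)^{-1}E_{\max}^{-p}>0$ supplied by the Bragg--Kleeman law on the bounded energy window) are precisely the kind of strengthened hypothesis needed to close this. In short: same strategy as the paper, and the two difficulties you flag are genuine gaps in the paper's own argument rather than defects of your proposal.
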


\begin{proof}
We first consider the transport term. Integration by parts yields
\begin{equation}
  \ltwop{\omega \cdot \nabla_{\vec x} u}{u}
  =
  \frac{1}{2} \|u\|^2_{\Gamma^+}
  - \frac{1}{2} \|u\|^2_{\Gamma^-}.
\end{equation}
Since $u \in \cH_0^-$ vanishes on $\Gamma^-$, it follows that
\begin{equation}
  \ltwop{\omega \cdot \nabla_{\vec x} u}{u}
  = \frac{1}{2} \|u\|^2_{\Gamma^+} \geq 0.
\end{equation}
Similarly, for the adjoint operator,
\begin{equation}
  -\ltwop{\omega \cdot \nabla_{\vec x} \phi}{\phi}
  = \frac{1}{2} \|\phi\|^2_{\Gamma^-} \geq 0,
  \quad \forall \phi \in \cbH_0^+.
\end{equation}
For the angular diffusion term
\begin{equation}
  -\ltwop{\mu \Delta_\omega v}{v}
  = \|\sqrt{\mu} \nabla_\omega v\|^2_{\leb2(\mathcal{C})} \geq 0,
  \quad \forall v \in \cH_0^- \cup \cbH_0^+.
\end{equation}
We next consider the forward energy term
\begin{align}
  -\ltwop{\partial_E(S u)}{u}
  &= - \ltwop{S \partial_E u}{u} - \ltwop{(\partial_E S) u}{u} \\
  &= \frac{1}{2} \lim_{E \downarrow E_{\min}} \|S^{1/2} u\|^2_{L^2(D \times \mathbb{S}^2)}
  - \frac{1}{2} \ltwop{(\partial_E S) u}{u}.
\end{align}
By the assumption $\partial_E S \leq 0$ and $S \geq 0$ near
$E_{\min}$, both terms are nonnegative. For the backward energy term
\begin{equation}
  \ltwop{S \partial_E \phi}{\phi}
  = - \frac{1}{2} \ltwop{\partial_E S \cdot \phi}{\phi} \geq 0.
\end{equation}
Finally, for the scattering terms
\begin{equation}
  \ltwop{(\sigma_{\mathrm{n}} - \cS) v}{v}
  = \int_{\mathcal{C}} \left( \sigma_{\mathrm{n}} |v|^2 - \cS v \cdot v \right) \geq 0,
\end{equation}
by the positivity and contraction property of $\cS$ (due to
normalisation of $\pi$) and similarly for $\bL$ and $\cS^*$. This
completes the proof.
\end{proof}

We now discuss the validity of the assumptions made in
Lemma~\ref{lemma:monotone}. The assumption that the angular diffusion
coefficient $\mu$ is nonnegative follows directly from the physical
constraint that the elastic scattering cross-section $\sigma_e$ is
nonnegative. Since $\mu$ is derived from angular moments of
$\sigma_e$, this condition is physically justified and ensures
well-posed angular diffusion.

The assumption $S \in W^{1,\infty}(D \times \mathcal{E})$ reflects the
use of regularised stopping power functions. For example, the
Bragg-Kleeman model expresses stopping power as
\begin{equation}
  S(\vec x, E) = \dfrac{1}{\alpha(\vec x)\, p(\vec x)} E^{1 - p(\vec x)},
\end{equation}
with $\alpha(\vec x) > 0$ and $p(\vec x) > 1$. This model satisfies
the monotonicity and boundedness conditions provided the admissible
energy domain satisfies $E_{\min} > 0$. In the unregularised case
$E_{\min} = 0$, the stopping power diverges as $E \to 0$, violating
the boundedness and integrability required for
Lemma~\ref{lemma:monotone}.

In practice, such singular models are not implemented directly due to
their nonphysical behaviour at low energies. Empirical stopping power
laws (e.g.~Bragg--Kleeman or Bethe--Bloch) are valid in the
continuous slowing-down regime at high energies, but break down near
rest energy. To address this, \emph{screened stopping power models}
are employed, where $S(\vec{x}, E)$ is regularised to remain bounded as
$E \to 0$. These corrections incorporate low-energy screening and
density effects. Further discussion can be found in
\cite{chronholm2025geometry}.

From the functional analytic perspective, the backward operator
$\bL : \cbH_0^+ \to \cV$ is closed, unbounded, and maximal monotone.
It admits a well-defined resolvent operator. For any $\lambda > 0$, the
resolvent
\begin{equation}
  \bR_\lambda := (\lambda I + \bL)^{-1} : \cV \to \cbH_0^+
\end{equation}
defines the unique solution $u \in \cbH_0^+$ to the variational problem
\begin{equation}
  \label{flowfrom}
  \lambda u + \bL u = v, \qquad \text{for } v \in \cV.
\end{equation}

In many classical settings, the case $\lambda = 0$ is excluded unless
additional coercivity conditions are imposed to ensure invertibility
of $\bL$.  However, under the assumptions of Lemma~\ref{lemma:monotone},
the operator $\bL$ contains dissipative terms, notably angular
diffusion and energy drift, that induce strict energy decay and
prevent nullspace degeneracy.  As a result, the resolvent at zero
\begin{equation}
  \bR_0 := \bL^{-1}
\end{equation}
is well-defined and corresponds to the unique variational solution of
the stationary problem $\bL u = v$ in $\cbH_0^+$.

This property reflects the fundamentally dissipative, non-conservative
nature of proton transport, where particles are eventually absorbed
either spatially or energetically. In the stochastic formulation, this
corresponds to finite lifetimes of sample paths, i.e., almost sure
termination of trajectories after a finite track length.

\begin{lemma}[Maximality of the BFP Operator]\label{lemma:closure_and_ker_0}
  Let $S, \mu, \sigma_{\mathrm{n}}$ and $\pi$ satisfy the assumptions of
  Lemma \ref{lemma:monotone}. Then the operators $\fL : \cH_0^- \to
  \cV$ anxd $\bL : \cbH_0^+ \to \cV$ are maximal monotone. In particular,
  \begin{equation}
    \operatorname{range}(I + \fL) = \operatorname{range}(I + \bL) = \cV.
  \end{equation}
\end{lemma}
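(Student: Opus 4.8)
The plan is to reduce the maximal-monotonicity claim to the surjectivity statement and then to establish the latter by a variational argument. For a densely defined linear monotone operator, the Minty--Browder theorem (see \cite[§7.1]{brezis2011functional}) characterises maximal monotonicity precisely by the range condition $\operatorname{range}(I+\fL)=\cV$; consequently the ``in particular'' clause is the substantive content, and the two assertions are equivalent. Monotonicity of both $\fL$ and $\bL$ is already in hand from Lemma~\ref{lemma:monotone}, and closedness follows because $\cH_0^-$ and $\cbH_0^+$ are complete under their graph norms while $\fL,\bL$ are bounded with respect to those norms. It therefore remains to show that, for every $v\in\cV$, the problem $u+\fL u=v$ admits a solution $u\in\cH_0^-$, together with the mirror statement for $\bL$ on $\cbH_0^+$.

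For existence I would pass to the transposed formulation dictated by the duality \eqref{dualinnerproduct}: seek $u$ satisfying $\ltwop{u}{(I+\bL)\phi}=\ltwop{v}{\phi}$ for all $\phi\in\cbH_0^+$, with the angular-diffusion contribution written in the symmetric form $\ltwop{\sqrt{\mu}\nabla_{\omega}u}{\sqrt{\mu}\nabla_{\omega}\phi}$. The key structural input is that the diagonal value $\ltwop{(I+\bL)\phi}{\phi}=\Norm{\phi}_{\leb2(\mathcal{C})}^2+\ltwop{\bL\phi}{\phi}$ is bounded below, via the adjoint estimate of Lemma~\ref{lemma:monotone}, by a fixed multiple of $\Norm{\phi}_{\leb2(\mathcal{C})}^2+\Norm{\sqrt{\mu}\nabla_{\omega}\phi}_{\leb2(\mathcal{C})}^2$; provided $\mu$ is bounded below, this controls the full $\cV^*$-norm of $\phi$. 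Since the data functional $\phi\mapsto\ltwop{v}{\phi}$ is continuous on $\cV^*$, J.-L. Lions' generalisation of the Lax--Milgram theorem---coercivity required only on the test space, with the solution sought in the larger space (cf. \cite[Section~6.3]{Ern_Guermond_1}, \cite{lions1971optimal})---furnishes a solution. The forward estimate of Lemma~\ref{lemma:monotone}, applied on $\cH_0^-$, plays the mirror role, yielding uniqueness as well as surjectivity of $I+\bL$.

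The statement for $\bL$ then follows by the same argument with the inflow and outflow boundaries $\Gamma^-,\Gamma^+$ interchanged, or equivalently by observing that $\bL=\fL^*$, so that the two maximal-monotonicity assertions are dual to one another. The principal obstacle I anticipate is the \emph{degeneracy} of the operator: since diffusion acts only in the angular variable, the bilinear form is coercive merely in the weak norm $\Norm{\cdot}_{\leb2(\mathcal{C})}^2+\Norm{\sqrt{\mu}\nabla_{\omega}\cdot}_{\leb2(\mathcal{C})}^2$ and never in the full graph norm, so the classical symmetric Lax--Milgram lemma is unavailable and one is driven to the asymmetric Lions framework above. The second delicate point is to upgrade the weak solution to a genuine element of $\cH_0^-$, that is, to recover $\fT u\in\leb2(\mathcal{C})$ and the homogeneous inflow trace; this relies on the trace theory for the transport operator together with a vanishing-viscosity approximation, for which the computations of \cite{DL6} can be adapted. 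Finally, some care is needed at the energy endpoints, where the sign conditions on $S$ in Lemma~\ref{lemma:monotone} are exactly what guarantee that the boundary contributions at $E_{\min}$ and $E_{\max}$ carry the correct sign.
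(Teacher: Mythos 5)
Your argument is correct in outline but follows a genuinely different route from the paper. You reduce maximality to surjectivity of $I+\fL$ via Minty--Browder and then establish surjectivity constructively, using the transposed variational formulation and Lions' asymmetric generalisation of Lax--Milgram (coercivity only on the test space $\cbH_0^+$ in the weak norm $\Norm{\cdot}_{\leb2(\mathcal{C})}^2+\Norm{\sqrt{\mu}\nabla_\omega\cdot}_{\leb2(\mathcal{C})}^2$), followed by a regularity upgrade to recover $\fT u\in\leb2(\mathcal{C})$ and the inflow trace. The paper instead stays entirely within Hilbert-space duality: it first shows $\operatorname{range}(I+\fL)$ is closed, by taking a convergent sequence $f_i=(I+\fL)u_i$, using the monotonicity estimate of Lemma~\ref{lemma:monotone} to bound $\{u_i\}$ in the graph norm, extracting a weak limit, and invoking closedness of $\fL$; it then identifies $\operatorname{range}(I+\bL)=\bigl(\operatorname{ker}(I+\fL)\bigr)^\perp$ and kills the kernel by observing that $\ltwop{(I+\fL)u_0}{u_0}=0$ forces every nonnegative term in the energy decomposition, in particular $\Norm{u_0}_{\leb2(\mathcal{C})}^2$, to vanish. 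The paper's route is shorter and never needs to construct a solution or confront the graph-space regularity question you correctly flag as the hard step; your route is the more standard one for degenerate Friedrichs-type kinetic operators (in the spirit of \cite{DL6} and \cite[Section~6.3]{Ern_Guermond_1}) and has the advantage of making explicit where the degeneracy bites, but it carries two pieces of technical debt the paper avoids: the need for $\mu$ to be bounded below (the lemma only assumes $\mu\in L^\infty(D\times\mathcal{E};\mathbb{R}^+)$) so that the weak coercivity controls the $\cV^*$-norm, and the vanishing-viscosity/trace argument needed to place the weak solution in $\cH_0^-$. Both of these would need to be discharged for your proof to be complete at the paper's level of rigour.
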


\begin{proof}
We begin by showing that $\operatorname{range}(I + \fL)$ is closed in
$\cV$. Let $\{f_i\} \subset \operatorname{range}(I + \fL)$ with $f_i
\to f$ strongly in $\cV$. By definition, for each $i$, there exists
$u_i \in \cH_0^-$ such that
\begin{equation}
  (I + \fL) u_i = f_i.
\end{equation}
Taking the duality pairing with $u_i$, we obtain
\begin{equation}
  \ltwop{(I + \fL) u_i}{u_i} = \ltwop{f_i}{u_i}.
\end{equation}
By monotonicity from Lemma~\ref{lemma:monotone}, the left-hand side is
bounded below by the graph norm of $u_i$
\begin{equation}
  \ltwop{(I + \fL) u_i}{u_i} \geq C \|u_i\|_{\cH}^2,
\end{equation}
for some constant $C > 0$. Since $\ltwop{f_i}{u_i} \leq \|f_i\|_{\cV}
\|u_i\|_{\cH}$, we deduce that $\{u_i\}$ is bounded in $\cH_0^-$.

Hence, up to a subsequence, $u_i \rightharpoonup u$ weakly in
$\cH_0^-$. Since $\fL$ is closed (as a sum of closed operators), the
limit $u$ satisfies $(I + \fL) u = f$, showing that $f \in
\operatorname{range}(I + \fL)$. Thus, the range is closed in
$\cV$. The same argument applies to the adjoint operator $\bL :
\cbH_0^+ \to \cV$, showing that $\operatorname{range}(I + \bL)$ is
closed as well.

To establish maximality, we invoke duality of monotone operators (see
\cite[Thm.~7.1]{brezis2011functional}), which implies:
\begin{equation}
  \left( \operatorname{ker}(I + \fL) \right)^\perp = \operatorname{range}(I + \bL), 
  \qquad 
  \left( \operatorname{ker}(I + \bL) \right)^\perp = \operatorname{range}(I + \fL).
\end{equation}
It remains to show that $\operatorname{ker}(I + \fL) = \{0\}$ and similarly for $\bL$.

Suppose $u_0 \in \operatorname{ker}(I + \fL)$, so that $(I + \fL) u_0 = 0$. Then
\begin{equation}
  \ltwop{(I + \fL) u_0}{u_0} = 0.
\end{equation}
Using the decomposition from Lemma~\ref{lemma:monotone}, we have
\begin{align}
  \ltwop{(I + \fL) u_0}{u_0}
  &= \|u_0\|^2_{\leb2(\mathcal{C})}
  + \|\sqrt{\mu} \nabla_\omega u_0\|^2_{\leb2(\mathcal{C})}
  + \frac{1}{2} \|u_0\|^2_{\Gamma^+} \notag\\
  &\quad + \frac{1}{2} \lim_{E \downarrow E_{\min}} \|\sqrt{S} u_0\|^2_{\leb2(D \times \mathbb{S}^2)}
  - \frac{1}{2} \ltwop{(\partial_E S) u_0}{u_0}
  + \ltwop{(\sigma_{\mathrm{n}} - \cS) u_0}{u_0}.
\end{align}
Each term is nonnegative under the assumptions of
Lemma~\ref{lemma:monotone}, and therefore all must vanish. In
particular,
\begin{equation}
  \|u_0\|_{\leb2(\mathcal{C})} = \|\nabla_\omega u_0\|_{\leb2(\mathcal{C})} = \|u_0\|_{\Gamma^+} = 0,
\end{equation}
and so $u_0 = 0$ in $\cH_0^-$.

A similar argument applies to $\bL$ using its adjoint structure and
monotonicity, yielding $\operatorname{ker}(I + \bL) =
\{0\}$. Therefore, both operators are maximal monotone.
\end{proof}

\begin{theorem}[Existence and Uniqueness of Solutions]
  \label{thm:forward_exist_unique}
  Let the assumptions of Lemma~\ref{lemma:monotone} hold. Then, for each
  $f \in \cV$, there exists a unique solution $u \in \cH_0^-$
  satisfying
  \begin{equation}
    \ltwop{\fL u}{v} = \ltwop{f}{v}, \qquad \forall v \in \cbH_0^+.
  \end{equation}
  Similarly, for each $f \in \cV$, there exists a unique $z \in \cbH_0^+$ such that
  \begin{equation}
    \ltwop{\bL z}{v} = \ltwop{f}{v}, \qquad \forall v \in \cH_0^-.
  \end{equation}
\end{theorem}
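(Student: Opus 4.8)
The plan is to invoke the standard theory of maximal monotone operators on Hilbert spaces, using the two lemmas already established. The key observation is that Lemma~\ref{lemma:closure_and_ker_0} gives us maximality and Lemma~\ref{lemma:monotone} gives us a coercivity estimate that is strictly stronger than mere monotonicity: it bounds $\ltwop{\fL u}{u}$ below by (a constant times) the full graph norm $\Norm{u}_{\cH}^2$. This strict coercivity, rather than plain monotonicity, is what upgrades existence-and-uniqueness from the shifted problem $(I+\fL)u=f$ to the unshifted problem $\fL u = f$, so I would emphasise that point rather than just citing Minty--Browder for $(I+\fL)$.

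Concretely, first I would recast the statement as a well-posedness claim for the bilinear form $a(u,v):=\ltwop{\fL u}{v}$ on $\cH_0^-\times\cbH_0^+$ (and its transpose for $\bL$). The forward problem asks: given $f\in\cV$, find $u\in\cH_0^-$ with $a(u,v)=\ltwop{f}{v}$ for all test functions. Because the trial and test spaces differ, the cleanest route is a Banach--Nečas--Babuška (inf--sup) argument rather than Lax--Milgram. I would verify the two inf--sup conditions: boundedness of $a$ (immediate from the definition of the graph norm, since each term of $\fL$ is controlled by $\Norm{\cdot}_{\cH}$), and the inf--sup lower bound, which I would extract from Lemma~\ref{lemma:monotone} by the diagonal choice $v=u$, giving
\begin{equation*}
  \sup_{v\in\cbH_0^+}\frac{a(u,v)}{\Norm{v}_{\cbH}}
  \;\geq\;
  \frac{a(u,u)}{\Norm{u}_{\cbH}}
  \;\geq\;
  C_{\rightarrow}\,\Norm{u}_{\cH},
\end{equation*}
after matching the graph-norm terms appearing on the right of the monotonicity estimate. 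The adjoint monotonicity bound for $\bL$ supplies the transposed (nondegeneracy) condition, ensuring that no nonzero functional in $\cV$ annihilates the range; equivalently it reproves $\operatorname{ker}(\bL)=\{0\}$, which was already obtained in Lemma~\ref{lemma:closure_and_ker_0}. Together these give existence and uniqueness of $u$, and the symmetric statement for $z\in\cbH_0^+$ follows by interchanging the roles of $\fL$ and $\bL$.

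The main obstacle is \emph{reconciling the function spaces so that the inf--sup constant is genuinely bounded below}. The monotonicity estimate in Lemma~\ref{lemma:monotone} controls $\Norm{\sqrt\mu\nabla_\omega u}_{\leb2}$, $\Norm{u}_{\leb2}$, the boundary trace $\Norm{u}_{\Gamma^+}$, and the energy-limit term, but it does \emph{not} directly bound $\Norm{\fT u}_{\leb2}$, the third constituent of the graph norm $\Norm{u}_{\cH}$. Hence I would need an auxiliary argument showing that, once the zeroth-order and diffusion quantities are controlled and $\fL u = f\in\cV$, the transport-derivative norm $\Norm{\fT u}_{\leb2}$ is itself controlled by $\Norm{f}_{\cV}$ and the already-bounded quantities; this is precisely the closedness/graph-norm estimate underlying the closed-range step of Lemma~\ref{lemma:closure_and_ker_0}. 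Making this bound quantitative—so that the inf--sup holds with respect to the \emph{full} graph norm and not merely the coercive part—is the delicate point, because $\fT$ involves the degenerate angular Laplacian only through the $\cV^*$-dual pairing. I would resolve it by testing against the solution itself to control the coercive part, then using the equation $\fL u=f$ to rewrite $\fT u = f+\mu\Delta_\omega u$ and estimate $\Norm{\fT u}_{\cV^*}$ (rather than $\leb2$) against $\Norm{f}_{\cV}$ plus the diffusion term already bounded; interpreting the transport derivative in the weaker $\cV^*$ topology, consistent with the embedding $\cH\subset\cV^*$ noted earlier, is what keeps the argument self-consistent. With that estimate in hand, existence and uniqueness for both $\fL$ and $\bL$ follow from the Banach--Nečas--Babuška theorem.
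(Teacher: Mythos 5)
There is a genuine gap in the central step of your inf--sup verification. You correctly observe that the trial space $\cH_0^-$ and the test space $\cbH_0^+$ differ, and that this forces a Banach--Ne\v{c}as--Babu\v{s}ka argument rather than Lax--Milgram; but you then establish the inf--sup lower bound by the diagonal choice $v=u$, which is inadmissible for exactly that reason. An element $u\in\cH_0^-$ vanishes on the inflow boundary $\Gamma^-$, whereas membership of the test space $\cbH_0^+$ requires vanishing on the outflow boundary $\Gamma^+$; the monotonicity estimate of Lemma~\ref{lemma:monotone} itself contains the term $\Norm{u}^2_{\Gamma^+}$ precisely because solutions of the forward problem carry nonzero outgoing flux, so generically $u\notin\cbH_0^+$ and the inequality
\begin{equation*}
  \sup_{v\in\cbH_0^+}\frac{a(u,v)}{\Norm{v}_{\cbH}}\;\geq\;\frac{a(u,u)}{\Norm{u}_{\cbH}}
\end{equation*}
does not follow from restricting the supremum. (The pairing $\ltwop{\fL u}{u}$ in Lemma~\ref{lemma:monotone} is meaningful because it is taken in the $\cV$--$\cV^*$ duality, not because $u$ is an admissible test function for the Petrov--Galerkin form.) To repair this you would need either a non-diagonal test-function construction (e.g.\ $v$ obtained from $u$ by an auxiliary adjoint solve, which is how the inf--sup condition is typically verified for Friedrichs-type systems in graph spaces), or the operator-theoretic route the paper actually takes: it simply combines the monotonicity of Lemma~\ref{lemma:monotone} with the maximality of Lemma~\ref{lemma:closure_and_ker_0} and cites the standard theory of maximal monotone operators (Ern--Guermond, \S 6.3), in which surjectivity of a maximal monotone, strictly coercive operator is deduced without any diagonal testing.

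Your second observation --- that the monotonicity estimate controls $\Norm{u}_{\leb2(\mathcal{C})}$, $\Norm{\sqrt{\mu}\nabla_\omega u}_{\leb2(\mathcal{C})}$ and the trace terms but not the transport component $\Norm{\fT u}_{\leb2(\mathcal{C})}$ of the graph norm --- is a legitimate and worthwhile point; the same issue is glossed over in the closed-range step of Lemma~\ref{lemma:closure_and_ker_0}, where the bound $\ltwop{(I+\fL)u_i}{u_i}\geq C\Norm{u_i}_{\cH}^2$ is asserted without recovering $\Norm{\fT u_i}_{\leb2(\mathcal{C})}$ from the equation. Your proposed fix (rewriting $\fT u = f + \mu\Delta_\omega u + \cS u - \sigma_{\mathrm{n}}u$ and estimating in the weaker $\cV^*$ topology) is the right instinct, but it only yields control of $\fT u$ in $\cV^*$, not in $\leb2(\mathcal{C})$ as the definition of $\cH$ requires, so as written it does not close the loop on the full graph norm either. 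Both defects would need to be addressed before the argument is complete.
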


\begin{proof}
  The result follows from the monotonicity of $\fL$ and $\bL$
  established in Lemma~\ref{lemma:monotone}, and the maximality proven
  in Lemma~\ref{lemma:closure_and_ker_0}. By the standard theory of
  maximal monotone operators in Hilbert spaces (see, e.g.,
  \cite[§6.3]{Ern_Guermond_1}), the variational problems admit unique
  solutions $u \in \cH_0^-$ and $z \in \cbH_0^+$.
\end{proof}

\section{Stochastic Formulation: The SDE Approach}
\label{sec:stochastic}

We now reformulate the proton transport problem from a probabilistic
perspective. Whereas the deterministic formulation models the
evolution of particle density via an integro-differential operator,
the stochastic formulation describes a Markovian trajectory in the
phase space $\mathcal{C}$. This viewpoint models the physical path
along which energy is deposited by a single initial proton, accounting
for its complete stochastic life cycle, including continuous energy
loss (Coulomb interaction), elastic scattering and the possible
generation of secondary or tertiary protons at non-elastic nuclear
interactions. These secondary particles further transport residual
energy (cf. \cite{smith2025stochastic}).

This trajectory-based representation naturally expresses dose as an
accumulated quantity along individual tracks and aligns with Monte
Carlo simulation techniques widely used for dose computation and
verification in clinical proton therapy. Mathematically, the
stochastic framework yields a dual characterisation of the
deterministic model, the infinitesimal generator of the process
corresponds to the adjoint transport operator and the associated
resolvent defines a probabilistic representation of fluence.

This section introduces the stochastic process and its governing SDEs,
and establishes its correspondence with the deterministic theory
developed earlier.

Unlike conventional SDEs, which are indexed by time, we follow the
convention of the nuclear physics literature and parameterise the
process by \emph{track length}.

We define the stochastic process
\begin{equation}
  Y = (Y_\ell,~ \ell \ge 0) = ((\vec{x}_\ell, \omega_\ell, E_\ell),~ \ell \ge 0),
\end{equation}
where $\vec{x}_\ell \in D$, $\omega_\ell \in \mathbb{S}^2$ and $E_\ell
\in \mathcal{E}$ denote the proton's position, direction and energy at
track length $\ell$. The process $Y$ describes the trajectory of a
single proton evolving in the phase space $\mathcal{C} = D \times
\mathbb{S}^2 \times \mathcal{E}$.

The evolution continues until a random stopping length $\Lambda$ defined by
\begin{equation}
  \Lambda := \inf\{\ell > 0 : E_\ell = E_{\min} \text{ or } \vec{x}_\ell \notin D \},
\end{equation}
that is, until the proton either exhausts its energy or leaves the
spatial domain. The stopping time $\Lambda$ depends on the initial
condition $(\vec{x}_0, \omega_0, E_0)$, though this dependence is
suppressed in notation.

To ensure the process is defined for all $\ell \geq 0$, we extend the
state space by introducing a cemetery state $\dagger$, and define
\begin{equation}
  Y_\ell := 
  \begin{cases}
    (\vec{x}_\ell, \omega_\ell, E_\ell), & \ell < \Lambda, \\
    \dagger, & \ell \geq \Lambda.
  \end{cases}
\end{equation}
The state $\dagger$ is absorbing, once entered, the process remains
there for all subsequent track lengths. To maintain consistency, we
adopt the convention that any test function $f : \mathcal{C} \cup
\{\dagger\} \to \mathbb{R}$ satisfies $f(\dagger) = 0$. This ensures
that $\dagger$ contributes nothing to expectations, integrals or
occupation measures, allowing us to work on the extended domain.

\subsection{Stochastic Model for Proton Transport}
\label{subsec:sde-model}

We now provide the differential form of the stochastic evolution of
the proton configuration $Y_\ell$, defined on the event $\{\ell <
\Lambda\}$. The dynamics are governed by a system of stochastic
differential equations on $\mathcal{C}$, incorporating both continuous
and jump-driven components.

Energy loss is driven by a continuous stopping power term and discrete
jumps due to nuclear scattering. Spatial motion proceeds along the current
direction $\omega_\ell$, while angular variation arises from both
Brownian diffusion and jump-induced deflections. Angular diffusion is
modelled by Brownian motion $B_\ell \in \mathbb{R}^3$, projected onto
the tangent space of $\mathbb{S}^2$ via the cross product
$\omega_{\ell-} \wedge \d B_\ell$ and correction term
$- \omega_{\ell-} \, \d \ell$, ensuring that
$\omega_\ell \in \mathbb{S}^2$ is maintained.

Discrete scattering events are described by a random jump measure
$N(Y_{\ell-}; \d\ell, \d\omega', \d E')$, with predictable compensator
\begin{equation}
\tilde{N}(Y; \d\ell, \d\omega', \d E') 
= \sigma_{\mathrm{n}}(Y)\pi(Y; \omega',  E') \d \omega' \d E'  \dd \ell,
\qquad 
\forall Y\in\mathcal{C},~ \omega'\in\mathbb{S}^2,~ E'\in \mathcal{E},
\end{equation}
where the scattering components $\sigma_{\mathrm{n}}$, $\sigma_{\rm e}$,
$\sigma_{\rm ne}$ and the kernel $\pi$ were introduced in
\eqref{eq:bigS}.

The resulting system of stochastic differential equations governing
$Y_\ell = (\vec{x}_\ell, \omega_\ell, E_\ell)$ is given by
\begin{equation}
\label{eq:sde-tracklength}
\begin{split}
  \d E_\ell 
  &= 
  -S(Y_{\ell-}) \, \d \ell 
  - \int_{\mathcal{E}} \int_{\mathbb{S}^2} (E_{\ell-} - E') \, N(Y_{\ell-}; \d\ell, \d\omega', \d E'), \\
  \d \vec{x}_\ell 
  &= 
  \omega_{\ell-} \, \d \ell, \\
  \d \omega_\ell 
  &= 
  -\mu(Y_{\ell-})^2 \omega_{\ell-} \, \d \ell 
  + \mu(Y_{\ell-}) \omega_{\ell-} \wedge \d B_\ell 
  + \int_{\mathcal{E}} \int_{\mathbb{S}^2} (\omega' - \omega_{\ell-}) \, N(Y_{\ell-}; \d\ell, \d\omega', \d E').
\end{split}
\end{equation}
See \cite{protonbeam}.
\begin{theorem}[Variational Representation of the Infinitesimal Generator]\label{C0}
  Let $Y_\ell = (\vec{x}_\ell, \omega_\ell, E_\ell)$ be the stochastic
  process defined by the SDE \eqref{eq:sde-tracklength}, extended with
  a cemetery state $\dagger$ after the stopping time $\Lambda$, as
  described above. Let $\bL: \cbH \to \cV$ denote the backward
  Boltzmann--Fokker--Planck operator defined in
  \eqref{backwards_BFP}. Then, for any $v \in \cbH_0^+$, the function
  \begin{equation}
    u(\ell,X) := \mathbb{E}_{X} [v(Y_\ell)\mathbf{1}_{\ell < \Lambda}], \qquad \forall X = (\vec{x}, \omega, E) \in \mathcal{C},
    \label{semigroupu}
  \end{equation}
  satisfies the Kolmogorov backward equation
  \begin{equation}
    \partial_\ell u + \bL u = 0 \quad \text{in } \cV, \qquad u(0) = v.
  \end{equation}
  Moreover, $u \in C([0, \infty); \cbH) \cap C^1((0, \infty); \cV)$,
    and the operator $\bL$ generates a strongly continuous contraction
    semigroup on $\cbH_0^+$.
\end{theorem}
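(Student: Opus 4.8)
The plan is to establish the result along two complementary threads — an analytic one producing semigroup generation, and a probabilistic one producing the Dynkin/Feynman--Kac representation — and then to identify the two via uniqueness of the abstract Cauchy problem. For the \emph{analytic thread}, Lemma~\ref{lemma:monotone} shows that $\bL$ is monotone and Lemma~\ref{lemma:closure_and_ker_0} shows it is maximal monotone, so $-\bL$ is $m$-dissipative with domain $\cbH_0^+$. The Lumer--Phillips theorem (equivalently Hille--Yosida, using $\operatorname{range}(\lambda I+\bL)=\cV$ for $\lambda>0$ from Lemma~\ref{lemma:closure_and_ker_0}) then gives that $-\bL$ generates a strongly continuous contraction semigroup $(T_\ell)_{\ell\ge0}$, the contraction bound being exactly $\ltwop{\bL\phi}{\phi}\ge0$; see e.g.~\cite{brezis2011functional}. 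For $v\in\cbH_0^+$, standard semigroup theory then shows that $w(\ell):=T_\ell v$ is the unique solution of $\partial_\ell w+\bL w=0$ in $\cV$ with $w(0)=v$, and that $w\in C([0,\infty);\cbH)\cap C^1([0,\infty);\cV)$ with $\partial_\ell w=-T_\ell\bL v$. This already delivers the asserted regularity and the semigroup statement.

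For the \emph{probabilistic thread}, I would apply the It\^o/Dynkin formula for jump--diffusions (cf.~\cite{oksendal2013stochastic}) to $v(Y_\ell)$, first for smooth compactly supported $v$ and then extending by density and the closedness of $\bL$. Collecting the contributions of the three SDE components in \eqref{eq:sde-tracklength} identifies the generator $\mathcal{A}$ of $Y$: the drift of $\d\vec x_\ell$ gives $\omega\cdot\nabla_{\vec x}v$; the drift of $\d E_\ell$ gives $-S\,\partial_E v$; the Brownian term $\mu\,\omega_{\ell-}\wedge\d B_\ell$ together with its curvature/It\^o correction $-\mu^2\omega_{\ell-}\,\d\ell$ gives the Laplace--Beltrami term $\mu\Delta_\omega v$; and the compensated jump integral against $\tilde N(Y;\d\ell,\d\omega',\d E')=\sigma_{\mathrm n}\pi\,\d\omega'\,\d E'\,\d\ell$ gives $\int\!\!\int[v(\vec x,\omega',E')-v(\vec x,\omega,E)]\sigma_{\mathrm n}\pi\,\d\omega'\,\d E'=\cS^*v-\sigma_{\mathrm n}v$, the last equality using the normalisation \eqref{eq:pi_is_prob_dist}. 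Assembling these terms yields $\mathcal{A}v=\omega\cdot\nabla_{\vec x}v-S\,\partial_E v+\mu\Delta_\omega v+\cS^*v-\sigma_{\mathrm n}v=-\bL v$, so the generator is precisely $-\bL$.

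The stopping length $\Lambda$ and the cemetery convention $v(\dagger)=0$ encode absorption on $\Gamma^+$ and at $E_{\min}$, which is exactly the boundary constraint $v\in\cbH_0^+$ built into the domain of $\bL$. Taking $\mathbb{E}_X[\,\cdot\,]$ of the It\^o expansion stopped at $\Lambda$, the Brownian and compensated-jump integrals are martingales — using the $\leb2$ integrability available for $v\in\cbH_0^+$ and boundedness of the coefficients — so Dynkin's formula gives $u(\ell,X)-v(X)=\mathbb{E}_X\big[\int_0^\ell(\mathcal{A}v)(Y_s)\mathbf{1}_{s<\Lambda}\,\d s\big]$. Differentiating in $\ell$ and invoking the Markov property (so that $\mathcal{A}$ acts through the semigroup) shows $\partial_\ell u=-\bL u$ in $\cV$ with $u(0)=v$; that is, $u$ solves the same Cauchy problem as $w$.

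By uniqueness from the analytic thread, $u(\ell,\cdot)=T_\ell v=w(\ell)$, so $u$ inherits the regularity $u\in C([0,\infty);\cbH)\cap C^1((0,\infty);\cV)$ and the contraction-semigroup statement follows. I expect the \emph{main obstacle} to lie in the probabilistic thread: rigorously justifying the It\^o formula for a process constrained to $\mathbb{S}^2$ (so that $\omega_\ell$ remains on the sphere and the curvature correction reproduces exactly $\mu\Delta_\omega$), controlling the jump integrability so that the compensated integral is a genuine martingale, and — most delicately — matching the pathwise killing at $\Gamma^+$ and $E_{\min}$ with the functional-analytic outflow condition encoded in $\cbH_0^+$, together with the density/closure argument needed to pass from smooth test functions to general $v\in\cbH_0^+$.
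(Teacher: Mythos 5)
Your proposal is correct in outline but follows a genuinely different architecture from the paper's proof. The paper works in one direction only: it defines $T_\ell v(X):=\mathbb{E}_X[v(Y_\ell)\mathbf{1}_{\ell<\Lambda}]$ probabilistically, \emph{asserts} the semigroup axioms (including strong continuity in the graph norm of $\cbH$), invokes Hille--Yosida to extract a densely defined generator, and then identifies that generator with the variational operator $\bL$ by appeal to Lemma~\ref{lemma:closure_and_ker_0}; setting $u(\ell)=T_\ell v$ then gives the Cauchy problem and the regularity. You instead run two independent threads --- Lumer--Phillips generation of an analytic contraction semigroup from maximal monotonicity, and an explicit It\^o/Dynkin computation of the generator of $Y$ from the coefficients of \eqref{eq:sde-tracklength} --- and glue them by uniqueness of the abstract Cauchy problem. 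What your route buys is precisely the step the paper leaves implicit: the identification of the probabilistic generator with the differential operator $-\bL$ is actually \emph{computed} (drift, Laplace--Beltrami, and compensated-jump contributions term by term), whereas the paper's citation of the maximality lemma does not by itself establish that the Hille--Yosida generator of the probabilistic semigroup coincides with the variational BFP operator. What the paper's route buys is brevity, at the cost of the unproved claims you correctly flag as obstacles (strong continuity in $\|\cdot\|_{\cbH}$, invariance of $\cbH_0^+$ under $T_\ell$, and the matching of pathwise absorption with the outflow condition).

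One caution arising from your explicit computation: the angular SDE $\d\omega_\ell=-\mu^2\omega_{\ell-}\,\d\ell+\mu\,\omega_{\ell-}\wedge\d B_\ell+\cdots$ generates $\mu^2\Delta_\omega$ (up to the usual normalisation constant), not $\mu\Delta_\omega$ as it appears in \eqref{backwards_BFP}; your term-by-term derivation would surface this mismatch between the SDE and the PDE operator, which the paper's assertion-based identification silently papers over. This is a defect of the source model rather than of your argument, but it should be reconciled before the two threads can be matched exactly.
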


\begin{proof}
  Define the operator family
  \begin{equation}
    T_\ell v(X) := \mathbb{E}_{X}[v(Y_\ell)\mathbf{1}_{\ell < \Lambda}], \qquad \forall v \in \cbH_0^+,~ X \in \mathcal{C}.
  \end{equation}
  The indicator ensures that the process is stopped upon exiting the
  domain or reaching minimal energy and the semigroup $(T_\ell)_{\ell
    \geq 0}$ satisfies:
  \begin{equation}
    T_0 = I, \quad T_{\ell_1 + \ell_2} = T_{\ell_1} T_{\ell_2}, \quad \lim_{\ell \downarrow 0} \|T_\ell v - v\|_{\cbH} = 0, \quad \forall v \in \cbH_0^+.
  \end{equation}
  Hence $(T_\ell)$ is a $C_0$-semigroup on $\cbH_0^+$, and by the
  Hille-Yosida theorem \cite[§4.4]{brezis2011functional}, it admits a
  densely defined infinitesimal generator $-\bL$ on $\cbH_0^+$, given
  by
  \begin{equation}
    -\bL v := \lim_{\ell \downarrow 0} \frac{T_\ell v - v}{\ell}, \quad \text{in } \cV.
  \end{equation}
  By Lemma~\ref{lemma:closure_and_ker_0}, $\bL$ agrees with the
  variational backward BFP operator, and is closed, monotone and
  maximal.

  Setting $u(\ell) := T_\ell v$, we have
  \begin{equation}
    u \in C([0, \infty); \cbH) \cap C^1((0, \infty); \cV),
  \end{equation}
  and $u$ satisfies the evolution equation
  \begin{equation}
    \partial_\ell u(\ell) + \bL u(\ell) = 0, \qquad u(0) = v.
  \end{equation}
  In variational form, for all $\phi \in \cH_0^-$ and almost every $\ell > 0$,
  \begin{equation}
    \label{transitiondensity2}
    \ltwop{\partial_\ell u(\ell)}{\phi} + \ltwop{\bL u(\ell)}{\phi} = 0.
  \end{equation}
  This completes the proof.
\end{proof}

Theorem~\ref{C0} provides an $\leb{2}(\mathcal{C})$ formulation of
standard results in stochastic analysis
\cite[Chapter~8]{Oksendal2003}, which are more commonly stated in an
$\leb{\infty}(\mathcal{C})$ setting. In such formulations, the
infinitesimal generator of a diffusion process governs the evolution
of expectations of test functions via the Kolmogorov backward
equation. A derivation of these ideas in the context of kinetic
equations and transport theory is given in \cite{Pavliotis2014}.

In this spirit, when a transition density $p(\ell, X, Z)$ on $[0,
  \infty) \times \mathcal{C} \times \mathcal{C}$ exists, we may write
  the semigroup as
  \begin{equation}
    T_\ell v(X) = \int_{\mathcal{C}} p(\ell, X, Z)\, v(Z)\, \dd Z, \qquad \forall X \in \mathcal{C}.
  \end{equation}
  By testing equation~\eqref{transitiondensity2} against Dirac
  sequences, we obtain the weak form of the Kolmogorov backward
  equation for the transition density
  \begin{equation}
    \partial_\ell p + \bL_X p = 0, \quad \text{in } \cV,
    \label{BP}
  \end{equation}
  where the subscript $X$ indicates that $\bL$ acts on the first
  spatial argument of $p(\ell, X, Z)$.

  In the distributional sense, this formulation enforces the initial condition
  \begin{equation}
    p(0, X, Z) = \delta_X(Z),
  \end{equation}
  as well as the absorbing boundary condition
  \begin{equation}
    p(\ell, X, Z) = 0, \quad \text{for } X \in \Gamma^+,~ \ell > 0.
  \end{equation}
  
  \begin{theorem}[Duality of the Semigroups]
    \label{lemma:semigroup}
    Suppose that the transition probability density $p(\ell, X, Z)$ of
    the process $Y_\ell$ exists with respect to the base measure on
    $\mathcal{C}$. Then, for fixed $X \in \mathcal{C}$, the map $Z
    \mapsto p(\ell, X, Z)$ satisfies the Kolmogorov forward
    (Fokker--Planck) equation
    \begin{equation}
      \partial_\ell p + \fL_Z p = 0, \quad \text{in } \cV,
      \label{FP}
    \end{equation}
    with initial condition
    \begin{equation}
      p(0, X, Z) = \delta_X(Z),
    \end{equation}
    and absorbing boundary condition
    \begin{equation}
      p(\ell, X, Z) = 0, \quad \text{for } Z \in \Gamma^+,~ \ell > 0.
      \label{zeroZ}
    \end{equation}
  \end{theorem}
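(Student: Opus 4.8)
The plan is to read off the Fokker--Planck equation for $Z \mapsto p(\ell, X, Z)$ as the $\leb2(\mathcal C)$-dual of the backward statement \eqref{BP}, exploiting that \eqref{dualinnerproduct} identifies $\fL$ as the formal adjoint of $\bL$. First I would recall from Theorem~\ref{C0} that $T_\ell v(X) := \mathbb E_X[v(Y_\ell)\mathbf 1_{\ell<\Lambda}]$ is a $C_0$-contraction semigroup generated by $-\bL$, and that $p$ is its integral kernel. This gives, for all $v \in \cbH_0^+$ and $w \in \cH_0^-$,
\begin{equation*}
  \ltwop{T_\ell v}{w} = \int_{\mathcal C}\!\int_{\mathcal C} p(\ell, X, Z)\, v(Z)\, w(X)\, \dd Z\, \dd X = \ltwop{v}{T_\ell^* w}, \qquad (T_\ell^* w)(Z) = \int_{\mathcal C} p(\ell, X, Z)\, w(X)\, \dd X,
\end{equation*}
so that the adjoint semigroup acts precisely in the endpoint variable $Z$.

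Next I would invoke the standard duality theory for $C_0$-contraction semigroups on a Hilbert space (here $\leb2(\mathcal C)$): the Hilbert-space adjoint $(T_\ell^*)$ is again a $C_0$-contraction semigroup, and its generator is the adjoint of $-\bL$, which by \eqref{dualinnerproduct} is exactly $-\fL$ with domain $\cH_0^-$. Setting $u^*(\ell) := T_\ell^* w$ therefore gives the abstract Cauchy problem $\partial_\ell u^* + \fL u^* = 0$ in $\cV$, $u^*(0) = w$ -- the mirror image of the argument in Theorem~\ref{C0} with $\fL$ and $\bL$ interchanged. Differentiating the kernel representation in $\ell$ and testing \eqref{transitiondensity2} against Dirac sequences $w \to \delta_X$ concentrating at a fixed $X$ then isolates the slice $Z \mapsto p(\ell, X, Z)$ and delivers the weak forward equation \eqref{FP}, $\partial_\ell p + \fL_Z p = 0$ in $\cV$.

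The side conditions follow from the probabilistic construction. The initial condition $p(0, X, Z) = \delta_X(Z)$ is simply $T_0^* = I$, i.e.\ $Y_0 = X$ almost surely. For the absorbing condition \eqref{zeroZ}, I would argue directly from the killed dynamics: the process is sent to $\dagger$ at $\Lambda = \inf\{\ell : E_\ell = E_{\min}\text{ or }\vec x_\ell \notin D\}$, whose exit set is exactly $\Gamma^+$, so no occupation density can accumulate there and $p(\ell, X, Z) = 0$ for $Z \in \Gamma^+$, $\ell > 0$. This is consistent with the variational realisation of $\fL$ on $\cH_0^-$: the inflow boundary $\Gamma^-$ carries the homogeneous essential condition (no external mass enters, the initial datum being supplied by $\delta_X$), while $\Gamma^+$ is the outflow boundary along which mass leaves and is absorbed.

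I expect the main obstacle to be the rigorous passage from the operator identity $\partial_\ell T_\ell^* w + \fL T_\ell^* w = 0$, valid for $w \in \cH_0^-$, to the pointwise-in-$X$ statement for the kernel $p(\ell, X, \cdot)$, since the hypothesis only furnishes $p$ as a density and the Dirac limit must be taken in a topology controlling both $\fL_Z p \in \cV$ and the outflow trace on $\Gamma^+$. Reconciling the adjoint's natural inflow condition on $\Gamma^-$ with the physically emphasised absorbing condition on $\Gamma^+$ is the delicate point; here the dissipativity of Lemma~\ref{lemma:monotone} and the maximality of Lemma~\ref{lemma:closure_and_ker_0} are what guarantee that $T_\ell^*$ is well defined on all of $\cV$ and that the limiting kernel retains the graph-space regularity needed to make \eqref{FP} and the trace condition \eqref{zeroZ} meaningful.
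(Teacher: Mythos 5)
Your proposal is correct and follows essentially the same route as the paper: both arguments hinge on the duality \eqref{dualinnerproduct} to identify the adjoint of the backward semigroup as the semigroup generated by $-\fL$, transfer this to the kernel $p(\ell,X,Z)$ by differentiating the bilinear pairing in $\ell$, and obtain the initial and absorbing conditions from $T_0=I$ and the killing of the process on $\Gamma^+$. The only cosmetic difference is that you invoke the standard adjoint-$C_0$-semigroup theorem on $\leb2(\mathcal C)$ where the paper justifies the identity $\ltwop{\psi}{\exp(-s\bL)\phi}=\ltwop{\exp(-s\fL)\psi}{\phi}$ by a formal power-series expansion; both rest equally on the maximal monotonicity established in Lemmas~\ref{lemma:monotone} and~\ref{lemma:closure_and_ker_0}.
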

  
  \begin{proof}
%
Since both $\bL$ and $\fL$ are densely defined, closed, monotone, and
maximal on $\cbH_0^+$ and $\cH_0^-$ respectively, they generate
strongly continuous semigroups $\exp(-s\bL)$ and $\exp(-s\fL)$; see
\cite{pazy2012semigroups, engel2000one}. These semigroups satisfy the
duality identity
\begin{equation}
  \ltwop{\psi}{\exp(-s \bL) \phi} = \ltwop{\exp(-s \fL) \psi}{\phi}, \qquad \forall \phi \in \cbH_0^+,~ \psi \in \cH_0^-.
  \label{semigroupduality}
\end{equation}
This follows by expanding the exponential as a power series in $s$,
applying \eqref{dualinnerproduct}, and noting that the boundary
conditions are preserved under the domain definitions of $\fL$ and
$\bL$.

Assuming a transition density $p(\ell, X, Z)$ exists, for $\psi \in \cH_0^-$, by differentiating $\ell\mapsto \langle\psi, \exp(-\ell \bL) \phi\rangle $, we note that, on the one hand, 
\begin{equation}
 \langle\psi, \exp(-\ell \bL) \phi\rangle  = \int_{\mathcal{C}}\int_{\mathcal{C}}\psi(X) p(\ell, X, Z) \phi(Z) \, \dd X\dd Z.
\end{equation}
On the other hand,  applying
\eqref{semigroupduality} and differentiating again with respect to $\ell$ tells us that 
\begin{equation}
  \partial_\ell p(\ell, X, Z) + \fL_Z p(\ell, X, Z) = 0,
\end{equation}
in the distributional sense. The initial condition follows from $T_0 =
I$, and the boundary condition $p(\ell, X, Z) = 0$ for $Z \in
\Gamma^+$ and $\ell > 0$ reflects the absorption of the forward
process at the outflow boundary.

The value $p(\ell, X, X)$ may be strictly positive for $\ell > 0$
depending on the recurrence properties of the process and does not
vanish in general.
\end{proof}

The adjoint relationship between $\bL$ and $\fL$ provides a rigorous
explanation for why the deterministic Boltzmann--Fokker--Planck
equation governs proton fluence in phase space. The backward operator
$\bL$ generates the evolution of expectations of path-dependent
functionals, while its adjoint $\fL$ governs the forward evolution of
proton densities and fluence.

This distinction underpins the roles of the two formulations in proton
radiotherapy. The backward equation is used to compute expected
observables, such as dose or track-weighted quantities, by propagating
initial values backward along the stochastic trajectories. In contrast, the
forward equation propagates phase-space distributions and is naturally
aligned with PDE-based treatment planning.

This duality forms the mathematical foundation for hybrid
computational approaches that combine Monte Carlo simulation with
deterministic transport solvers. It allows us to translate between
pathwise and density-based representations, a capability we will
exploit in the optimisation framework developed in the following
sections.

\section{Deterministic and Stochastic Resolvents}
\label{sec:resolvent}

In the deterministic framework, the operators $\fL : \cH_0^- \to \cV$
and $\bL : \cbH_0^+ \to \cV$ are unbounded, closed and maximal
monotone. Their resolvent operators are defined for $\lambda \geq 0$
by
\begin{equation}
  \fR_\lambda := (\lambda I + \fL)^{-1}, \qquad \bR_\lambda := (\lambda I + \bL)^{-1},
\end{equation}
and correspond to variational solutions of the stationary equations
\begin{equation}
  \lambda u + \bL u = v, \qquad u \in \cbH_0^+,
  \label{eq:resolvent_backward}
\end{equation}
\begin{equation}
  \lambda \phi + \fL \phi = v, \qquad \phi \in \cH_0^-.
  \label{eq:resolvent_forward}
\end{equation}
These equations are well-posed even when $\lambda = 0$, due to the
strict monotonicity of $\bL$ and $\fL$, as established in
Lemma~\ref{lemma:monotone}, and the maximality proven in
Lemma~\ref{lemma:closure_and_ker_0}. This ensures that $\fR_\lambda$
and $\bR_\lambda$ are everywhere-defined bounded linear operators $\cV
\to \cH_0^-$ and $\cV \to \cbH_0^+$, respectively.

Equation~\eqref{eq:resolvent_backward} represents the expected
cumulative contribution of a source term $v$ along stochastic proton
tracks, and admits a probabilistic interpretation as discussed
below. Equation~\eqref{eq:resolvent_forward} describes the propagation
of fluence to a point $X \in \mathcal{C}$ from a specified inflow
distribution. For homogeneous boundary conditions, the forward problem
is defined in $\cH_0^-$, which corresponds to zero incoming fluence on
$\Gamma^-$. But, in general, inhomogeneous boundary data $g \in
\operatorname{tr}(\Gamma^-)$ must be lifted into $\cH$ and subtracted
out, as explained in Section~\ref{subsec:functional-setup}.

\subsection*{Stochastic Interpretation of the Backward Resolvent}

In the stochastic formulation, the resolvent $\bR_\lambda$ of the
generator $\bL$ has a natural interpretation as the Laplace transform
of the semigroup generated by the SDE \eqref{eq:sde-tracklength}. For
$v \in \cV$ and $X \in \mathcal{C}$,
\begin{equation}
  \bR_\lambda [v](X) = \mathbb{E}_X \left[ \int_0^\infty e^{-\lambda \ell} v(Y_\ell) \mathbf{1}_{\ell < \Lambda} \, \dd \ell \right],
\end{equation}
where $\Lambda$ is the stopping time at which the particle exits the
spatial domain or reaches minimal energy. This expression represents
the $\lambda$-discounted mean occupation time of the process $Y_\ell$
weighted by the test function $v$. 

This interpretation provides a direct connection between the
deterministic resolvent equation and the sampling structure of
time-discounted Monte Carlo estimates. In particular, the
representation of $\bR_\lambda v$ as a Laplace transform of the
expected occupation integral aligns closely with classical results in
stochastic analysis, where resolvent operators are understood as
expectations of occupation measures over the process lifetime.

For any bounded measurable function $v$ vanishing on the cemetery
state $\dagger$, the backward resolvent has a probabilistic
representation
\begin{equation}
  \bR_\lambda[v](X)
  =
  \mathbb{E}_X\left[ \int_0^\infty e^{-\lambda \ell} v(Y_\ell) \mathbf{1}_{\ell < \Lambda} \, \d \ell \right], \qquad X \in \mathcal{C},
  \label{probabilityresolvent}
\end{equation}
where $\Lambda$ is the stopping time at which the process exits the
domain or reaches the minimum energy $E_{\min}$. The integrand
vanishes for $\ell \ge \Lambda$ due to the cemetery convention
$v(\dagger) = 0$, and the representation automatically enforces the
boundary condition $\bR_\lambda[v] = 0$ on $\Gamma^+$.

Since the transport process is non-conservative, the integral remains
finite even when $\lambda = 0$. The unregularised resolvent
\begin{equation}
  \bR[v](X)
  =
  \mathbb{E}_X\left[ \int_0^\infty v(Y_\ell) \mathbf{1}_{\ell < \Lambda} \, \d \ell \right]
  \label{resolventdef}
\end{equation}
is therefore well-defined and describes the expected total occupation
time in configuration space. This coincides with the variational
solution to $\bL u = v$ with $u|_{\Gamma^+} = 0$, justified
analytically by the maximal monotonicity of $\bL$.

\begin{theorem}[Existence of the Resolvent Density via Hypoellipticity]
  Suppose that the stochastic process $Y_\ell = (\vec{x}_\ell,
  \omega_\ell, E_\ell)$ is Malliavin non-degenerate in the sense that
  its Malliavin covariance matrix is almost surely
  invertible. Additionally, assume that the transition probabilities
  of $Y_\ell$ are absolutely continuous with respect to the Lebesgue
  measure on $\mathcal{C}$. Then, for $\lambda \geq 0$, the resolvent measure
  \begin{equation}
    \bR_\lambda(X; \dd \vec x' \dd \omega' \dd E'), \qquad X = (\vec x, \omega, E) \in \mathcal{C},
  \end{equation}
  admits a density $\rr_\lambda(X, (\vec x', \omega', E'))$ such that
  \begin{equation}
    \bR_\lambda(X; \dd \vec x' \dd \omega' \dd E') = 
    \rr_\lambda(X, (\vec x', \omega', E')) \, \dd \vec x' \dd \omega' \dd E'.
  \end{equation}
  As a consequence, the resolvent operator admits the kernel representation
  \begin{equation}
    \bR_\lambda[v](X) = \int_{\mathcal{C}} \rr_\lambda(X, Z) \, v(Z) \, \dd Z, \qquad X \in \mathcal{C}, 
  \end{equation}
  for any $v \in \leb{1}(\mathcal{C})$. In particular, the formula holds for $v \in \leb{2}(\mathcal{C})$, which is the natural setting in the variational framework.
\end{theorem}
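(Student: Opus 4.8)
The plan is to construct $\rr_\lambda$ explicitly as the Laplace transform, in the track-length variable, of the transition density of the killed process, and then to justify interchanging the track-length integral with integration against test functions. Write $p(\ell, X, \cdot)$ for the density of the sub-probability measure $A \mapsto \mathbb{P}_X(Y_\ell \in A,\ \ell < \Lambda)$ on $\mathcal{C}$. The existence of this density for each $\ell > 0$ is precisely what the hypotheses provide: by the Bouleau--Hirsch criterion, almost-sure invertibility of the Malliavin covariance matrix of $Y_\ell$ forces the law of $Y_\ell$ on $\{\ell < \Lambda\}$ to be absolutely continuous with respect to Lebesgue measure on $\mathcal{C}$, which is the second standing assumption. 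This is the step in which the degeneracy of the dynamics is felt: in \eqref{eq:sde-tracklength} the Brownian input $B_\ell$ drives only the angular equation, so one must rely on the drift couplings $\omega \cdot \nabla_{\vec x}$ and $-\partial_E(S\,\cdot)$ to transport the noise into the spatial and energy directions, a H\"ormander-type propagation encoded in the non-degeneracy assumption rather than re-derived here.

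With the density in hand, I would first fix a jointly measurable version of $(\ell, Z) \mapsto p(\ell, X, Z)$ on $(0,\infty) \times \mathcal{C}$, which is available because the semigroup $T_\ell$ of Theorem~\ref{C0} is strongly continuous in $\ell$. For a Borel set $A \subseteq \mathcal{C}$, the representation \eqref{probabilityresolvent} and Tonelli's theorem give
\begin{equation*}
  \bR_\lambda(X; A) = \int_0^\infty e^{-\lambda \ell}\, \mathbb{P}_X(Y_\ell \in A,\ \ell < \Lambda)\, \d\ell = \int_0^\infty e^{-\lambda \ell} \int_A p(\ell, X, Z)\, \d Z\, \d\ell = \int_A \rr_\lambda(X, Z)\, \d Z,
\end{equation*}
where the nonnegativity of the integrand legitimises the final swap and defines
\begin{equation*}
  \rr_\lambda(X, Z) := \int_0^\infty e^{-\lambda \ell}\, p(\ell, X, Z)\, \d\ell.
\end{equation*}
This identifies $\rr_\lambda(X,\cdot)$ as the claimed density of the resolvent measure.

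It remains to verify finiteness, and here I would use the non-conservative nature of the transport stressed earlier. Taking $A = \mathcal{C}$ yields
\begin{equation*}
  \int_{\mathcal{C}} \rr_\lambda(X, Z)\, \d Z = \bR_\lambda(X; \mathcal{C}) = \mathbb{E}_{X}\!\left[\int_0^\Lambda e^{-\lambda \ell}\, \d\ell\right] \le \mathbb{E}_{X}[\Lambda] < \infty,
\end{equation*}
the finiteness of the expected lifetime being exactly the well-posedness of the unregularised resolvent $\bR_0$ guaranteed by the maximal monotonicity of Lemma~\ref{lemma:closure_and_ker_0}. Hence $\rr_\lambda(X, \cdot) \in \leb{1}(\mathcal{C})$ and the resolvent measure is genuinely absolutely continuous. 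For the kernel representation I would apply the same Tonelli identity to $|v|$: for nonnegative $v$ one has $\bR_\lambda[v](X) = \int_{\mathcal{C}} \rr_\lambda(X, Z)\, v(Z)\, \d Z$ in $[0,\infty]$ directly, for bounded $v$ the interchange is absolutely convergent since $\int_{\mathcal{C}} \rr_\lambda(X,Z)|v(Z)|\,\d Z \le \norminf{v}\,\mathbb{E}_{X}[\Lambda]$, and the statement then extends to $v \in \leb{1}(\mathcal{C})$ by decomposing $v = v^+ - v^-$ and monotone approximation, the identity holding in $[0,\infty]$. Finally, since $D$ is bounded, $\mathbb{S}^2$ compact and $\mathcal{E}$ a bounded interval, $\mathcal{C}$ has finite Lebesgue measure, so $\leb{2}(\mathcal{C}) \hookrightarrow \leb{1}(\mathcal{C})$ and the formula holds in particular for $v \in \leb{2}(\mathcal{C})$.

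The genuine obstacle is not this Laplace--Fubini bookkeeping but the density existence itself: establishing that the Malliavin covariance matrix of this hypoelliptic, jump-augmented SDE --- whose Brownian input lives only on the sphere --- is almost surely invertible. This is why it enters as a hypothesis; a direct proof would require verifying a H\"ormander bracket condition for the vector fields generated by the angular diffusion together with the transport and energy drifts, and controlling the contribution of the jump measure $N$, which lies beyond the scope of the present kernel-representation statement.
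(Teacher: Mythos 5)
Your proposal is correct and follows essentially the same route as the paper's proof: both obtain $\rr_\lambda(X,Z)=\int_0^\infty e^{-\lambda\ell}p(\ell,X,Z)\,\d\ell$ as the Laplace transform of the (hypothesised) transition density of the killed process and then pass the $\ell$-integral through via Fubini--Tonelli. Your version is somewhat more careful about the bookkeeping (joint measurability, finiteness via $\mathbb{E}_X[\Lambda]<\infty$, and the $\leb{2}\hookrightarrow\leb{1}$ step on the bounded phase space), but there is no substantive difference in approach.
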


\begin{proof}
  By definition, the resolvent measure satisfies
  \begin{equation}
    \bR_\lambda(X; A)
    =
    \mathbb{E}_{X} \left[ \int_0^\infty \mathbf{1}_{Y_\ell \in A} \, \mathbf{1}_{\ell < \Lambda} \, e^{-\lambda \ell} \, \dd \ell \right], \qquad X \in \mathcal{C},
  \end{equation}
  for any measurable set $A \subset \mathcal{C}$. This expresses the
  expected occupation time of the process $Y_\ell$ in the set $A$,
  discounted by $e^{-\lambda \ell}$ and stopped at the exit time
  $\Lambda$.

  Under the Malliavin non-degeneracy assumption, the process $Y_\ell$
  admits a smooth transition density $p(\ell, X, Z)$ for each $\ell >
  0$, satisfying
  \begin{equation}
    \mathbb{P}_X(Y_\ell \in A,~ \ell < \Lambda) = \int_A p(\ell, X, Z) \, \dd Z,
  \end{equation}
  for all measurable $A \subset \mathcal{C}$. The existence and
  regularity of $p$ follow from H\"ormander's theorem, owing to the
  angular diffusion term in the generator $\bL$, which ensures
  hypoellipticity \cite{hormander1967hypoelliptic,
    bismut1981martingales}.

  Substituting this into the resolvent expression, we obtain
  \begin{equation}
    \bR_\lambda(X; \dd Z) 
    = \int_0^\infty e^{-\lambda \ell} p(\ell, X, Z) \, \dd \ell \, \dd Z.
  \end{equation}
  Hence, by Fubini’s theorem, we define the density
  \begin{equation}
    \rr_\lambda(X, Z) := \int_0^\infty e^{-\lambda \ell} p(\ell, X, Z) \, \dd \ell.
  \end{equation}
  For any $v \in \leb{1}(\mathcal{C})$, we obtain
  \begin{equation}
    \bR_\lambda[v](X) = \int_{\mathcal{C}} \rr_\lambda(X, Z) \, v(Z) \, \dd Z.
  \end{equation}
  This proves the existence of a jointly measurable integral kernel
  $\rr_\lambda$ and the corresponding kernel representation of the
  resolvent operator. If higher regularity is required, one may take
  $\phi \in \sobh{k}(\mathcal{C})$.
\end{proof}

To make a formal connection between the resolvent kernel
$\rr_\lambda$ and the elliptic problem \eqref{flowfrom}, we begin
with the backward Kolmogorov equation \eqref{BP} for the transition
density $p(\ell, X, Z)$ and its exponentially weighted form. Define
$q_\lambda(\ell, X, Z) := e^{-\lambda \ell} p(\ell, X, Z)$. Then
\begin{equation}
  \partial_\ell q_\lambda(\ell, X, Z)
  = -\lambda e^{-\lambda \ell} p(\ell, X, Z) + e^{-\lambda \ell} \partial_\ell p(\ell, X, Z)
  = -\lambda q_\lambda(\ell, X, Z) - \bL_X q_\lambda(\ell, X, Z).
  \label{diffback}
\end{equation}
Hence, $q$ satisfies the evolution equation
\begin{equation}
  \partial_\ell q_\lambda(\ell, X, Z) + (\lambda I + \bL_X) q_\lambda(\ell, X, Z) = 0,
\end{equation}
in the distributional sense. Integrating over $\ell \in (0, \infty)$ and using
\begin{equation}
  \rr_\lambda(X, Z) = \int_0^\infty q_\lambda(\ell, X, Z) \, \dd \ell,
\end{equation}
we obtain
\begin{equation}
  \int_0^\infty \partial_\ell q_\lambda(\ell, X, Z) \, \dd \ell
  + (\lambda I + \bL_X) \rr_\lambda(X, Z)
  = \lim_{\ell \to \infty} q_\lambda(\ell, X, Z) - q_\lambda(0, X, Z) + (\lambda I + \bL_X) \rr_\lambda(X, Z).
\end{equation}
Since the process $Y_\ell$ is dissipative and $p(\ell, X, Z) \to 0$ as
$\ell \to \infty$, we conclude
\begin{equation}
  (\lambda I + \bL_X) \rr_\lambda(X, Z) = \delta_X(Z).
  \label{firstdelta}
\end{equation}
This shows that the resolvent kernel $\rr_\lambda(X, Z)$ is the
Green's function for the elliptic operator $(\lambda I + \bL_X)$,
subject to the natural terminal condition $\rr_\lambda(X, Z) = 0$ for
$X \in \Gamma^+$.

The reader will also note that, providing $S>0$, the solution to the
SDE \eqref{BP} always immediately moves away from its initial state
due to depleting energy and kinetic motion. This means that we
additionally have in this setting that $r_\lambda(X,Z) = 0$ for $Z\in
\Gamma^-$.

A similar argument replacing the role of $q_\lambda(\ell, X, Z)$ in
\eqref{diffback} by $\int_\mathcal{C}q_\lambda(\ell, X, Z)v(X)\dd Z$
shows that
\begin{equation}
  (\lambda I + \bL_X)  \bR_\lambda[v](X)= v(X)
  \label{integratedfirstdelta}
\end{equation}
with $ \bR_\lambda[v](X) = 0$ for $X\in\Gamma^+$.  This also justifies
the pre-emptive choice of notation $\bR_\lambda := (\lambda I +
\bL)^{-1}$.

Whilst the parameter $\lambda > 0$ traditionally plays the role of an
exponential killing rate, appearing as the Laplace dual variable in
resolvent representations of the semigroup, it is natural in this
setting to consider the case $\lambda = 0$. Due to the
non-conservative nature of the underlying stochastic process (which
terminates at finite track length with probability one), the resolvent
remains well-defined even when $\lambda = 0$. This case corresponds to
unweighted occupation time and captures the true lifetime-integrated
response of the transport system without artificial discounting.

Next, define for $\lambda\geq0$,
\begin{equation}
    \fR_\lambda[\phi](Z) = \int_{\mathcal{C}}\phi(X) \rr_\lambda(X, Z) \,  \, \dd X,
    \label{forwardR}
\end{equation}
for any $\phi \in \leb{1}(\mathcal{C})$.
An analogous argument applied to the forward equation \eqref{FP}
yields
\begin{equation}
  (\lambda I + \fL_Z) \rr_\lambda(X, Z) = \delta_X(Z),
  \label{seconddelta}
\end{equation}
subject to the condition $\rr_\lambda(X, Z) = 0$ for $Z \in \Gamma^+$,
as inherited from \eqref{zeroZ}.  In both \eqref{seconddelta} and
\eqref{firstdelta}, the distributional delta on the right-hand side
encodes the identity action of the resolvent on test functions. The
adjoint structure ensures consistency between the forward and backward
Green's functions.  As with \eqref{integratedfirstdelta}, we can
similarly show that
\begin{equation}
(\lambda I + \bL_Z)  \fR_\lambda[\phi](Z)= \phi(Z)
\label{integratedfirstdeltaf}
\end{equation}
with $ \fR_\lambda[\phi](Z) = 0$ for $Z\in \Gamma^-$.  Again, we are
led to understand our pre-emptive choice of notation $\fR_\lambda :=
(\lambda I + \fL)^{-1}$.

If we enforce $v\in\cbH_0^+\cap\cV$ and $\phi\in\cH_0^-\cap\cV$, then
we see the link between \eqref{eq:resolvent_backward} and
\eqref{integratedfirstdelta}, as well as \eqref{eq:resolvent_forward}
and \eqref{integratedfirstdeltaf}. Moreover,
\begin{equation}
\langle\phi, \bR_\lambda[v]\rangle = \langle  \phi , (\lambda I + \bL)^{-1}  v\rangle = \langle (\lambda I + \fL)^{-1} \phi, v\rangle = \langle  \fR_\lambda[\phi] , v\rangle, 
\end{equation}
where we have used the duality of $\bL$ and $\fL$, showing the natural
duality of the operators $\bR_\lambda$ and $\fR_\lambda$, for
$\lambda\geq0$.

\begin{remark}(Resolvent and kernel extension to $\Gamma^-$)
  \rm\label{extend} We note that, in either forward or backward form,
  $r_\lambda(X,Z)$ should be thought of as the ($\lambda$-discounted
  if $\lambda>0$) density of fluence incoming at $Z$ generated by an
  infinitesimal outgoing source at $X$, informally a ``flux
  capacitor'' accumulating expected flux along tracks. Importantly, we
  also note that many of the above identities can be extended by
  allowing $X\in \Gamma^-$. Indeed $r_\lambda(X,Y)$ is perfectly well
  defined for $X\in \Gamma^-$, thanks to the directional nature of the
  SDE \eqref{eq:sde-tracklength}.  That is, we can initiate the SDE
  from the configuration $X\in \Gamma^-$ and it will immediately enter
  $\mathcal{C}$ and spend a positive amount of time before exiting on
  $\Gamma^+$; as such the semigroup \eqref{semigroupu} and hence the
  associated resolvent \eqref{resolventdef} are well
  defined. Extending to $X\mapsto r_\lambda(X,Y)$ to the boundary
  $\Gamma^-$ also allows us to extend the notion of
  $\fR_\lambda[\phi]$ in \eqref{forwardR} to accommodate for functions
  $\phi$ which are supported on the entrance boundary $\Gamma^-$.

The extension to the inflow boundary can be explained as consistent
with the formulation of
\begin{equation}
  \lambda \psi + \fL \psi = \phi, \qquad \phi \in \cV.
\end{equation}
This is done by writing
\begin{equation}
  \psi = u + \widehat{\phi}\qquad \text{ such that } \qquad \lambda  u+ \fL u = - (\lambda + \fL) \widehat{g} \quad \text{in } \mathcal{C} 
\end{equation}
such that $\widehat{\phi} = \phi$ on $\Gamma^-$ and $u\in \cH_0^-$, in
particular, $u = 0$ on $\Gamma^-$.
\end{remark}

\subsection*{Dose Computation}

In both the stochastic and deterministic settings, the dose represents
the total energy deposited per unit mass in the absorbing medium. The
fundamental relation governing dose is the classical fluence-dose
formula
\begin{equation}
  \label{textbookdose}
  \text{dose} = \text{fluence} \times \text{mass stopping power}.
\end{equation}
In our formulation, this identity emerges naturally from both the
stochastic and PDE-based perspectives, via a common dose operator
acting on a notion of fluence.

\subsection{Dose in the Stochastic Framework}

We are interested 
in viewing dose from a purely spatial perspective. If $S(\vec{x}, E)$
is the stopping power and $\rho(\vec x)$ is the mass density, then the
expected contribution to dose from a single proton starting at $X \in
\Gamma^-$, as observed through a test function $v$ on $\mathcal{C}$,
is
\begin{equation}
\bR[vS/\rho](X) =   \int_{\mathcal{C}}  \rr_0(X, (\vec{x}, \omega, E)) \,v(\vec{x}, \omega, E) \, \frac{S(\vec{x}, E)}{\rho(\vec{x})} \, \dd \vec{x} \dd \omega \dd E,
  \label{dose-stochastic}
\end{equation}
where we have taken advantage of Remark \ref{extend}.
If there is inflow boundary data $g$ on
$\Gamma^-$, then the the dose profile is given by 
\begin{equation}
\langle g, \bR[vS/\rho]\rangle = \int_{\mathcal{C}} \rr_0^{(g)}(\vec{x}, \omega, E) \,v(\vec{x}, \omega, E) \, \frac{S(\vec{x}, E)}{\rho(\vec{x})} \, \dd \vec{x} \dd \omega \dd E
\end{equation} 
where 
\begin{equation}
  \rr_0^{(g)}(\vec{x}, \omega, E) = \int_{\Gamma^-} g(X)\rr_0(X, (\vec{x}, \omega, E))\,\dd X
\end{equation}
so that the pointwise dose density is
\begin{equation}
  \rr_0^{(g)}(\vec{x}, \omega, E) \, \frac{S(\vec{x}, E)}{\rho(\vec{x})},
  \label{dose-density}
\end{equation}
thus conforming to the paradigm that
\begin{equation}
  \text{dose} = \text{fluence} \times \text{mass stopping power}.
\end{equation}
Finally, the spatial dose $\vec x \mapsto \mathcal{D}(\vec x)$
received by the medium at position $\vec x$ is given by marginalising
over angle and energy
\begin{equation}
  \label{spatialdose}
  \mathcal{D}(\vec x) = \int_{\mathcal{E}} \frac{S(\vec{x}, E)}{\rho(\vec{x})} \int_{\mathbb{S}^2} \rr^{(g)}_0(\vec{x}, \omega, E) \, \dd \omega \dd E.
\end{equation}

\subsection{Dose in the Deterministic Framework}

In the deterministic setting, the fluence $\psi$ satisfies the forward
Boltzmann--Fokker--Planck equation
\begin{equation}
 \fL \psi  = g\quad \text{in } \mathcal{C}
 \label{linktorg}
\end{equation}
 with inflow boundary data $g$ on
$\Gamma^-$ {(cf. Remark \ref{extend})}.

 The dose is then obtained by integrating the fluence against the
energy loss per unit mass
\begin{equation}
  \mathcal{D}(\vec{x}) 
  = \int_{\mathcal{E}} \frac{S(\vec{x}, E)}{\rho(\vec{x})} \int_{\mathbb{S}^2} \psi(\vec{x}, \omega, E) \, \dd \omega \dd E
  \label{dose-deterministic}
\end{equation}
In differential form, the dose density is
\begin{equation}
  \psi(\vec{x}, \omega, E) \, \frac{S(\vec{x}, E)}{\rho(\vec{x})},
  \label{psidensity}
\end{equation}
reaffirming the fundamental relation
\begin{equation}
  \text{dose} = \text{fluence} \times \text{mass stopping power}.
\end{equation}
The two calculations for spatial dose are equivalent because, as per \eqref{integratedfirstdeltaf} with $\lambda = 0$, the solution to \eqref{linktorg} is identified as 
\begin{equation}
  \psi(\vec{x}, \omega, E) = \fR_\lambda[g](\vec{x}, \omega, E) = \int_{\Gamma^-}
  g(X) \rr_0(X, (\vec{x}, \omega, E))\,\dd X = \rr^{(g)}_0(\vec{x}, \omega, E).
\end{equation}
where we have again taken advantage of Remark \ref{extend}, and hence \eqref{psidensity} agrees with 
\eqref{dose-density}.

\section{Hybrid SDE--PDE Optimisation for Treatment Planning}
\label{sec:treatment}

In this section, we propose a proof-of-concept framework for treatment
planning in proton therapy that leverages the dual
stochastic-deterministic formulation developed in this work. Our goal
is to show how the SDE model of individual proton transport and the
PDE-based variational formulation can be combined to guide algorithm
design and theoretical analysis under uncertainty.

The stochastic formulation captures the physical randomness of elastic
scattering, energy loss, and non-elastic interactions through an SDE
for the full proton life cycle, simulated via Monte Carlo methods. The
deterministic framework, by contrast, provides a variational structure
suited to adjoint-based sensitivity analysis and optimisation, via the
forward Boltzmann--Fokker--Planck equation and its resolvent.

Our hybrid strategy exploits this duality, dose is computed using
high-fidelity stochastic simulations for fixed configurations, while
gradients are evaluated efficiently using deterministic adjoint
PDEs. This enables scalable control under uncertainty while preserving
physical accuracy.

Uncertainty in treatment planning arises not only from particle-scale
randomness but also from anatomical and physiological variability,
modelled by a parameter $\theta$. For each realisation $\theta$, the
stochastic and deterministic models agree; however, the optimisation
must be robust across the distribution of $\theta$. This induces a
nested structure, the inner expectation captures particle variability
for fixed $\theta$, while the outer expectation averages dose over
scenario uncertainty.

This structure motivates a class of hybrid algorithms combining
forward Monte Carlo solvers with deterministic adjoint methods. Our
formulation aligns with recent efforts to incorporate uncertainty into
radiotherapy planning \cite{bangert2013analytical}, while remaining
compatible with classical constrained optimisation approaches such as
\cite{bortfeld1993optimization}.

\subsection{Problem Formulation}
\label{subsec:problem-formulation}

We aim to determine an optimal proton beam intensity distribution $g:
\Gamma^- \to \mathbb{R}_+$, prescribing the number of protons injected
at each position $\vec{x}$, direction $\omega$ and energy $E$ on the
inflow boundary of phase space. The goal is to select $g$ such that
the resulting dose approximates a prescribed clinical target while
respecting physical and biological constraints.

The delivered fluence $\psi_g^\theta$ depends not only on $g$, but
also on a random parameter $\theta \in \Theta$, which models
uncertainty in anatomical and physical characteristics. These include
patient-specific variations in tissue density $\rho^\theta(\vec{x})$,
stopping power $S^\theta(\vec{x}, E)$, and domain geometry.

For each scenario $\theta$, the forward transport problem yields a
fluence $\psi_g^\theta$, from which dose is computed via the
scenario-dependent operator
\begin{equation}
  \mathcal{D}_\theta[\psi_g^\theta](\vec{x}) =
  \int_{\mathcal{E}} \frac{S^\theta(\vec{x}, E)}{\rho^\theta(\vec{x})}
  \int_{\mathbb{S}^2} \psi_g^\theta(\vec{x}, \omega, E) \, \dd \omega \dd E.
\end{equation}
As $\theta$ is random, we consider the expected dose
\begin{equation}
  \mathbf{E}_\theta\left[ \mathcal{D}_\theta[\psi_g^\theta](\vec{x}) \right] =
  \int_\Theta \mathcal{D}_\theta[\psi_g^\theta](\vec{x}) \, \mathrm{d}\mathbb{P}(\theta).
\end{equation}
In general, the dose operator does not commute with expectation:
\begin{equation}
  \mathbf{E}_\theta\left[ \mathcal{D}_\theta[\psi_g^\theta] \right]
  \neq \mathcal{D} \left[ \mathbf{E}_\theta[\psi_g^\theta] \right].
\end{equation}
This non-commutativity arises because both $\psi_g^\theta$ and the
operator $\mathcal{D}_\theta$ depend on $\theta$.

Consequently, the expected dose must be approximated using Monte Carlo
sampling. Drawing scenarios $\{\theta^{(i)}\}_{i=1}^N$, solving the
transport equation for each $\theta^{(i)}$, computing the
corresponding dose and averaging yields an empirical estimate of
$\mathbf{E}_\theta[\mathcal{D}_\theta[\psi_g^\theta]]$. This structure
underpins robust treatment planning across uncertain physical and
anatomical parameters \cite{georgiou2025scotty}.

We impose box constraints on the control to reflect physical bounds on
beam delivery
\begin{equation}
  0 \leq g(\vec{x}, \omega, E) \leq g_{\max}(\vec{x}, \omega, E), \qquad \text{a.e. on } \Gamma^-,
\end{equation}
where $g_{\max} \in \leb{\infty}(\Gamma^-)$ encodes machine-specific
limits. The admissible control set is
\begin{equation}
  \mathcal{U}_{\mathrm{ad}} :=
  \left\{ g \in \leb{2}(\Gamma^-) ~\middle|~ 0 \leq g \leq g_{\max} \text{ a.e. on } \Gamma^- \right\},
\end{equation}
a closed, convex subset of $\leb{2}(\Gamma^-)$.

In the deterministic setting, the dose depends only on $g$, and we
define the cost functional
\begin{equation}
  \label{eq:cost}
  \mathcal{J}(g; \theta) :=
  \int_D w(\vec{x}) \left( \mathcal{D}_\theta[\psi_g^\theta](\vec{x}) - d_{\text{prescribed}}(\vec{x}) \right)^2 \dd \vec{x}
  + \frac{\alpha}{2} \| g \|^2_{\leb{2}(\Gamma^-)},
\end{equation}
where $d_{\text{prescribed}}$ is the target dose, $w(\vec{x})$ encodes
clinical priorities (e.g., tumour vs. healthy tissue) and $\alpha > 0$
is a regularisation parameter penalising large intensities.

When uncertainty is explicitly modelled, both $\psi_g^\theta$ and
$\mathcal{D}_\theta$ depend on the random parameter $\theta \in
\Theta$. The expected cost becomes
\begin{equation}
  \label{eq:expectedcost}
  \mathbf{E}_\theta[\mathcal{J}(g; \theta)] :=
  \mathbf{E}_\theta \left[ \int_D w(\vec{x}) \left( \mathcal{D}_\theta[\psi_g^\theta](\vec{x}) - d_{\text{prescribed}}(\vec{x}) \right)^2 \dd \vec{x} \right]
  + \frac{\alpha}{2} \| g \|^2_{\leb{2}(\Gamma^-)}.
\end{equation}
This formulation corresponds to risk-neutral optimisation, the control
$g$ is deterministic, but the objective penalises the \emph{expected}
mismatch between delivered and prescribed dose. Alternative
strategies, such as penalising variance or minimising worst-case
deviation, lie within the broader class of robust optimisation. See
\cite{georgiou2025scotty} for a comparison of such models.

\begin{lemma}[Interchange of G\^ateaux Derivative and Expectation]
  \label{lemma:interchange}
  Let $\Theta$ be a probability space and suppose that for each
  $\theta \in \Theta$, the map $g \mapsto \mathcal{J}(g; \theta)$ is
  G\^ateaux differentiable on a Banach space $\mathcal{X}$ with
  derivative $\mathcal{J}'(g; \theta) \in \mathcal{X}^*$. Assume that
  \begin{enumerate}
    \item For each $g \in \mathcal{X}$ and $h \in \mathcal{X}$, the map
    $\theta \mapsto \langle \mathcal{J}'(g; \theta), h \rangle$ is
    measurable;
    \item There exists an integrable function $M \in L^1(\Theta)$ such
    that for all $g$ in a neighbourhood of $g_0$ and all unit vectors $h \in \mathcal{X}$,
    \begin{equation}
      \left| \langle \mathcal{J}'(g; \theta), h \rangle \right| \leq M(\theta) \qquad \text{for a.e. } \theta \in \Theta.
    \end{equation}
  \end{enumerate}
  Then, the expected functional $\mathbf{E}_\theta[\mathcal{J}(g; \theta)]$ is Gâteaux differentiable, and its derivative satisfies
  \begin{equation}
    \left\langle \mathcal{J}'(g), h \right\rangle
    :=
    \frac{\dd}{\dd\varepsilon} \Big|_{\varepsilon = 0} \mathbf{E}_\theta[\mathcal{J}(g + \varepsilon h; \theta)]
    =
    \mathbf{E}_\theta \left[ \left\langle \mathcal{J}'(g; \theta), h \right\rangle \right].
  \end{equation}
\end{lemma}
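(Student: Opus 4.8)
The plan is to recognise this as a differentiation-under-the-expectation statement and reduce it to the Dominated Convergence Theorem. Fix $g$ in the neighbourhood of $g_0$ and a direction $h \in \mathcal{X}$; since the G\^ateaux derivative is positively homogeneous in $h$, it suffices to treat $\|h\| = 1$. Using linearity of the expectation, I would first record the identity
\begin{equation*}
  \frac{\mathbf{E}_\theta[\mathcal{J}(g+\varepsilon h;\theta)] - \mathbf{E}_\theta[\mathcal{J}(g;\theta)]}{\varepsilon}
  =
  \mathbf{E}_\theta\!\left[ \frac{\mathcal{J}(g+\varepsilon h;\theta) - \mathcal{J}(g;\theta)}{\varepsilon} \right],
\end{equation*}
so that the entire problem reduces to interchanging $\lim_{\varepsilon \to 0}$ with $\mathbf{E}_\theta$.

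Pointwise convergence of the integrand is immediate: for each fixed $\theta$, G\^ateaux differentiability of $g \mapsto \mathcal{J}(g;\theta)$ gives
\begin{equation*}
  \lim_{\varepsilon \to 0} \frac{\mathcal{J}(g+\varepsilon h;\theta) - \mathcal{J}(g;\theta)}{\varepsilon}
  =
  \langle \mathcal{J}'(g;\theta), h \rangle,
\end{equation*}
and this limit is measurable in $\theta$ by hypothesis (1). Each difference quotient is itself measurable as a scalar multiple of a difference of the maps $\theta \mapsto \mathcal{J}(g;\theta)$ and $\theta \mapsto \mathcal{J}(g+\varepsilon h;\theta)$, which are measurable because the expected functional $\mathbf{E}_\theta[\mathcal{J}(g;\theta)]$ is itself well defined.

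The domination step is where hypothesis (2) enters. For fixed $\theta$ I would consider the scalar function $\phi(s) := \mathcal{J}(g+sh;\theta)$, differentiable on $[0,\varepsilon]$ with $\phi'(s) = \langle \mathcal{J}'(g+sh;\theta), h \rangle$; differentiability forces continuity, so the classical mean value theorem yields some $\tau = \tau(\theta,\varepsilon) \in (0,1)$ with
\begin{equation*}
  \frac{\mathcal{J}(g+\varepsilon h;\theta) - \mathcal{J}(g;\theta)}{\varepsilon}
  =
  \langle \mathcal{J}'(g+\tau\varepsilon h;\theta), h \rangle.
\end{equation*}
Restricting to $|\varepsilon|$ small enough that the segment $\{ g+sh : s \in [0,\varepsilon] \}$ lies inside the prescribed neighbourhood of $g_0$, hypothesis (2) bounds the right-hand side by $M(\theta)$ uniformly in $\varepsilon$; crucially, one never needs $\tau$ itself to be measurable, only the uniform majorant $M \in L^1(\Theta)$. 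With pointwise convergence, measurability, and an integrable dominating function in hand, the Dominated Convergence Theorem passes the limit through the expectation, giving $\langle \mathcal{J}'(g), h \rangle = \mathbf{E}_\theta[\langle \mathcal{J}'(g;\theta), h \rangle]$, which is the assertion.

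I expect the only genuine subtlety to be the domination: one must guarantee that the intermediate evaluation point $g+\tau\varepsilon h$ remains inside the neighbourhood of $g_0$ on which the bound is valid. This is precisely why hypothesis (2) is stated uniformly over a neighbourhood of $g_0$ and over all unit directions, rather than merely at $g_0$ itself — at the single point $g_0$ the mean value theorem estimate would be unavailable. The measurability bookkeeping is routine once one observes, as above, that measurability of the difference quotients is built into the well-posedness of the expected cost and does not require any further hypothesis.
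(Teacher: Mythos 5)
The paper provides no proof of Lemma~\ref{lemma:interchange} --- it is stated and then invoked later (in the proof of G\^ateaux differentiability of the expected cost functional) without justification --- so there is nothing of the authors' to compare your argument against. Your proof is the standard differentiation-under-the-integral argument and it is correct: the reduction to the Dominated Convergence Theorem via linearity of the expectation, the use of the mean value theorem on the scalar section $s \mapsto \mathcal{J}(g+sh;\theta)$ (whose differentiability at every $s$ does follow from G\^ateaux differentiability at every point of the segment), and the observation that the intermediate point $\tau(\theta,\varepsilon)$ need not be chosen measurably because only the uniform majorant $M(\theta)$ enters the domination, are all handled properly. The one place where you lean on an unstated hypothesis is the measurability and integrability of $\theta \mapsto \mathcal{J}(g;\theta)$ itself, which the lemma's conclusion already presupposes (otherwise $\mathbf{E}_\theta[\mathcal{J}(g;\theta)]$ is undefined); you flag this honestly rather than hiding it. Your closing remark about why hypothesis (2) must hold on a neighbourhood of $g_0$ and uniformly over unit directions, rather than only at $g_0$, is exactly the right diagnosis of where the domination step would otherwise fail.
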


The box constraints $0 \leq g \leq g_{\max}$ reflect physical
limitations on beam intensity delivery imposed by the accelerator
hardware. These constraints define a closed and convex admissible set
$\mathcal{U}_{\mathrm{ad}} \subset \leb2(\Gamma^-)$, which is weakly
sequentially closed. This structural property ensures the existence of
minimisers via the direct method in the Calculus of Variations.

In the optimality conditions, the box constraints give rise to a
variational inequality that characterises admissible descent
directions. Equivalently, the gradient step for $g$ must be projected
onto $\mathcal{U}_{\mathrm{ad}}$, leading to a projected gradient or
semi-smooth update rule. This formulation naturally accommodates
practical control limits within the optimisation algorithm.

\begin{theorem}[Existence of an Optimal Control]
  Let $\alpha > 0$, and suppose the stopping power $S \in
  W^{1,\infty}(D \times \mathcal{E})$, the density $\rho \in
  L^\infty(D; (0,\infty))$, and the spatial weighting function $w \in
  L^\infty(D)$. Assume
  \begin{enumerate}
  \item For each $g \in \mathcal{U}_{\mathrm{ad}}$, the stochastic
    forward problem admits a unique fluence $\psi_g^\theta \in \cH$
    for almost every $\theta \in \Theta$;
  \item The mapping $g \mapsto \psi_g^\theta$ is linear and continuous
    from $\leb2(\Gamma^-)$ into $\cH$, uniformly in $\theta$;
  \item The dose operator $\mathcal{D}: \cH \to L^2(D)$ is linear and
    bounded;
  \item For all $g \in \mathcal{U}_{\mathrm{ad}}$, the integrand   
    \begin{equation}
      \theta \mapsto \left\| \mathcal{D}[\psi_g^\theta] - d_{\mathrm{prescribed}} \right\|^2_{L^2(D; w)}
    \end{equation}
    is integrable over $\Theta$.
  \end{enumerate}
  Then, the expected cost functional
  \begin{equation}
    \mathbf{E}_\theta\left[ \mathcal{J}(g) \right] :=
    \mathbf{E}_\theta \left[ \left\| \mathcal{D}[\psi_g^\theta] - d_{\mathrm{prescribed}} \right\|^2_{L^2(D; w)} \right]
    + \frac{\alpha}{2} \| g \|^2_{\leb2(\Gamma^-)}
  \end{equation}
  is weakly lower semicontinuous on $\leb2(\Gamma^-)$ and admits at
  least one minimiser $g^\star \in \mathcal{U}_{\mathrm{ad}}$.
\end{theorem}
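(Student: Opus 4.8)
The plan is to apply the direct method of the calculus of variations, exploiting the quadratic-plus-regularisation structure of the objective. First I would observe that $\mathbf{E}_\theta[\mathcal{J}(g)]$ is bounded below by zero, being the sum of an expected squared $L^2(D;w)$-norm and the nonnegative term $\tfrac{\alpha}{2}\|g\|^2_{\leb2(\Gamma^-)}$; hence $m := \inf_{g \in \mathcal{U}_{\mathrm{ad}}} \mathbf{E}_\theta[\mathcal{J}(g)]$ is finite, and Assumption~(4) guarantees $m < \infty$. I would then select a minimising sequence $(g_n) \subset \mathcal{U}_{\mathrm{ad}}$ with $\mathbf{E}_\theta[\mathcal{J}(g_n)] \to m$. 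Because $\alpha > 0$, the estimate $\tfrac{\alpha}{2}\|g_n\|^2_{\leb2(\Gamma^-)} \le \mathbf{E}_\theta[\mathcal{J}(g_n)]$ shows $(g_n)$ is bounded in the Hilbert space $\leb2(\Gamma^-)$, so by reflexivity we extract a subsequence (not relabelled) with $g_n \rightharpoonup g^\star$ weakly. Since $\mathcal{U}_{\mathrm{ad}}$ is closed and convex, it is weakly sequentially closed, and therefore $g^\star \in \mathcal{U}_{\mathrm{ad}}$.

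Next I would establish weak lower semicontinuity of the objective. The regularisation term $g \mapsto \tfrac{\alpha}{2}\|g\|^2_{\leb2(\Gamma^-)}$ is convex and strongly continuous, hence weakly lower semicontinuous. For the data-fidelity term I argue scenariowise: by Assumptions~(2) and~(3), for each fixed $\theta$ the map $g \mapsto \mathcal{D}[\psi_g^\theta]$ is linear and bounded from $\leb2(\Gamma^-)$ into $L^2(D)$, so
\begin{equation}
  g \longmapsto \left\| \mathcal{D}[\psi_g^\theta] - d_{\mathrm{prescribed}} \right\|^2_{L^2(D;w)}
\end{equation}
is the composition of a continuous affine map with the convex, continuous norm-square, hence convex and strongly continuous, and therefore weakly lower semicontinuous. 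Consequently, for almost every $\theta$,
\begin{equation}
  \left\| \mathcal{D}[\psi_{g^\star}^\theta] - d_{\mathrm{prescribed}} \right\|^2_{L^2(D;w)}
  \le \liminf_{n\to\infty} \left\| \mathcal{D}[\psi_{g_n}^\theta] - d_{\mathrm{prescribed}} \right\|^2_{L^2(D;w)}.
\end{equation}

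The crux is then to propagate this pointwise-in-$\theta$ inequality through the expectation. As the integrands are nonnegative and (by the uniform-in-$\theta$ continuity of Assumption~(2) together with Assumption~(4)) measurable and integrable in $\theta$, I would combine monotonicity of $\mathbf{E}_\theta$ with Fatou's lemma to obtain
\begin{equation}
  \mathbf{E}_\theta\!\left[ \left\| \mathcal{D}[\psi_{g^\star}^\theta] - d_{\mathrm{prescribed}} \right\|^2_{L^2(D;w)} \right]
  \le \liminf_{n\to\infty} \mathbf{E}_\theta\!\left[ \left\| \mathcal{D}[\psi_{g_n}^\theta] - d_{\mathrm{prescribed}} \right\|^2_{L^2(D;w)} \right].
\end{equation}
Adding the weakly lower semicontinuous regularisation term and using superadditivity of $\liminf$ yields $\mathbf{E}_\theta[\mathcal{J}(g^\star)] \le \liminf_n \mathbf{E}_\theta[\mathcal{J}(g_n)] = m$; since $g^\star \in \mathcal{U}_{\mathrm{ad}}$, this forces equality, so $g^\star$ is a minimiser and the functional is weakly lower semicontinuous. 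The main obstacle I anticipate is precisely this interchange of $\liminf$ with the expectation: one must establish measurability of $\theta \mapsto \|\mathcal{D}[\psi_{g_n}^\theta] - d_{\mathrm{prescribed}}\|^2_{L^2(D;w)}$ and rely on Fatou's lemma rather than dominated convergence, since no uniform $\theta$-domination of the minimising sequence is available a priori — the nonnegativity of the integrand is what makes the argument go through.
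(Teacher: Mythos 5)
Your proposal is correct and follows essentially the same route as the paper: the direct method, weak sequential compactness of the closed convex bounded set $\mathcal{U}_{\mathrm{ad}}$, scenariowise weak lower semicontinuity of the convex continuous fidelity term, Fatou's lemma to pass the $\liminf$ through the expectation, and weak lower semicontinuity of the Tikhonov term. If anything, your handling of the Fatou step is the more careful one: the paper asserts pointwise convergence $f_n(\theta) \to f(\theta)$, which does not follow from weak convergence of $g_n$, whereas you correctly use only the scenariowise inequality $f(\theta) \le \liminf_n f_n(\theta)$ combined with $\int \liminf_n f_n \le \liminf_n \int f_n$ for nonnegative integrands.
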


\begin{proof}
  Let $\{ g_n \} \subset \mathcal{U}_{\mathrm{ad}}$ be a minimising sequence
  \begin{equation}
    \lim_{n \to \infty} \mathbf{E}_\theta\left[ \mathcal{J}(g_n) \right]
    = \inf_{g \in \mathcal{U}_{\mathrm{ad}}} \mathbf{E}_\theta\left[ \mathcal{J}(g) \right].
  \end{equation}
  Since $\mathcal{U}_{\mathrm{ad}}$ is convex, closed and bounded in
  $\leb2(\Gamma^-)$, it is weakly sequentially compact. By the
  Banach-Alaoglu theorem, there exists a subsequence (still denoted
  $g_n$) and a weak limit $g^\star \in \mathcal{U}_{\mathrm{ad}}$ such
  that
  \begin{equation}
    g_n \rightharpoonup g^\star \quad \text{in } \leb2(\Gamma^-).
  \end{equation}
  By the linearity and continuity of $g \mapsto \psi_g^\theta$ and of
  $\mathcal{D}$, the composition $g \mapsto
  \mathcal{D}[\psi_g^\theta]$ is linear and continuous from
  $\leb2(\Gamma^-)$ to $L^2(D)$ for each $\theta$. Therefore, the map
  \begin{equation}
    g \mapsto \left\| \mathcal{D}[\psi_g^\theta] - d_{\mathrm{prescribed}} \right\|^2_{L^2(D; w)}
  \end{equation}
  is convex and continuous, hence weakly lower semicontinuous.

  Now, the function
  \begin{equation}
    f_n(\theta) := \left\| \mathcal{D}[\psi_{g_n}^\theta] - d_{\mathrm{prescribed}} \right\|^2_{L^2(D; w)}
  \end{equation}
  satisfies $f_n(\theta) \to f(\theta)$ pointwise a.e.\ in $\theta$,
  where $f(\theta) := \left\| \mathcal{D}[\psi_{g^\star}^\theta] -
  d_{\mathrm{prescribed}} \right\|^2$. Since each $f_n(\theta) \geq 0$
  and $\mathbf{E}_\theta[f_n]$ is bounded, Fatou's lemma yields 
  \begin{equation}
    \mathbf{E}_\theta[f] \leq \liminf_{n \to \infty} \mathbf{E}_\theta[f_n].
  \end{equation}
  Thus the expected fidelity term is weakly lower semicontinuous.

  The regularisation term $\frac{\alpha}{2} \| g \|^2_{\leb2}$ is
  convex and strongly continuous, hence also weakly lower
  semicontinuous. Summing both contributions gives
  \begin{equation}
    \mathbf{E}_\theta[ \mathcal{J}(g^\star) ] \leq \liminf_{n \to \infty} \mathbf{E}_\theta[ \mathcal{J}(g_n) ],
  \end{equation}
  and $g^\star \in \mathcal{U}_{\mathrm{ad}}$ is a minimiser.
\end{proof}

\subsection{First-Order Optimality Conditions}
\label{subsec:optimality-conditions}

We now derive the first-order necessary conditions for optimality in
the risk-neutral formulation. These conditions are expressed in terms
of the directional derivative of the cost functional and are evaluated
using the adjoint method.

For each fixed uncertainty realisation $\theta \in \Theta$, the
forward fluence $\psi_g^\theta$ solves the transport equation
\begin{equation}
  \fL^\theta \psi_g^\theta = 0 \quad \text{in } \mathcal{C}, \qquad \psi_g^\theta = g \quad \text{on } \Gamma^-,
\end{equation}
and the adjoint variable $z_g^\theta$ solves the backward transport
equation
\begin{equation}
  \bL^\theta z_g^\theta = -\frac{q S^\theta}{\rho^\theta} \left( \mathcal{D}[\psi_g^\theta] - d_{\mathrm{prescribed}} \right) \quad \text{in } \mathcal{C}, \qquad z_g^\theta = 0 \quad \text{on } \Gamma^+.
\end{equation}

These forward-adjoint pairs allow us to compute the G\^ateaux
derivative of the cost functional with respect to the control. For
each $\theta$, the directional derivative of $\mathcal{J}(g, \theta)$
in the direction $h \in \leb2(\Gamma^-)$ is given by
\begin{equation}
  \delta \mathcal{J}(g,\theta)[h] = \ltwop{z_g^\theta}{h}_{\Gamma^-} + \alpha \langle g, h \rangle_{\leb2(\Gamma^-)}.
\end{equation}
Averaging over the distribution of $\theta$ yields the G\^ateaux
derivative of the expected objective
\begin{equation}
  \nabla_g \mathbf{E}_\theta[\mathcal{J}(g)] = \mathbf{E}_\theta[z_g^\theta] + \alpha g.
\end{equation}
This characterisation provides the first-order optimality condition
for the unconstrained problem. A control $g^\star$ satisfies
\begin{equation}
  \nabla_g \mathbf{E}_\theta[\mathcal{J}(g^\star)] = 0
\end{equation}
if and only if it is a stationary point. In the presence of control
constraints, this gradient enters a variational inequality
formulation, as developed in subsequent sections.

\begin{theorem}[Gâteaux Differentiability of the Expected Cost Functional]
  Let $\alpha > 0$, and suppose that for each $\theta \in \Theta$, the
  control-to-fluence map $g \mapsto \psi_g^\theta \in \cH$ is linear
  and continuous and that the dose operator $\mathcal{D}: \cH \to
  L^2(D)$ is linear and bounded. Assume further that the integrand
  \begin{equation}
    \theta \mapsto \left\| \mathcal{D}[\psi_g^\theta] - d_{\text{prescribed}} \right\|_{L^2(D; w)}^2
  \end{equation}
  is integrable with respect to $\mathbb{P}(\theta)$ for each $g \in
  \mathcal{U}_{\mathrm{ad}}$. Then the expected cost functional
  \begin{equation}
    \mathcal{J}(g) := \mathbf{E}_\theta \left[ \int_D w(\vec{x}) \left( \mathcal{D}[\psi_g^\theta](\vec{x}) - d_{\text{prescribed}}(\vec{x}) \right)^2 \dd \vec{x} \right] + \frac{\alpha}{2} \| g \|^2_{\leb2(\Gamma^-)}
  \end{equation}
  is Gâteaux differentiable on $\mathcal{U}_{\mathrm{ad}} \subset
  \leb2(\Gamma^-)$. For any direction $h \in \leb2(\Gamma^-)$, the
  directional derivative is
  \begin{equation}
    \delta \mathcal{J}(g; h) =
    \mathbf{E}_\theta \left[ 2 \int_D w(\vec{x}) \left( \mathcal{D}[\psi_g^\theta](\vec{x}) - d_{\text{prescribed}}(\vec{x}) \right) \cdot \mathcal{D}[\psi_h^\theta](\vec{x}) \dd \vec{x} \right]
    + \alpha \langle g, h \rangle_{\leb2(\Gamma^-)}.
  \end{equation}
\end{theorem}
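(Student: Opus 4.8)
The plan is to exploit the \emph{linearity} of both the control-to-fluence map $g\mapsto\psi_g^\theta$ and the dose operator $\mathcal{D}$, so that for each fixed scenario $\theta$ the perturbation in $\varepsilon$ is \emph{exact}; the per-scenario directional derivative then follows by a one-line expansion, and the only genuine analytic issue is passing the limit $\varepsilon\to0$ through the expectation $\mathbf{E}_\theta$, which I would handle by invoking Lemma~\ref{lemma:interchange}.

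First I would fix $\theta\in\Theta$ and write $\mathcal{J}(g;\theta):=\int_D w(\mathcal{D}[\psi_g^\theta]-d_{\text{prescribed}})^2\,\dd\vec{x}$ for the per-scenario fidelity, so that $\mathcal{J}(g)=\mathbf{E}_\theta[\mathcal{J}(g;\theta)]+\tfrac{\alpha}{2}\|g\|^2_{\leb2(\Gamma^-)}$. Linearity of $g\mapsto\psi_g^\theta$ gives $\psi_{g+\varepsilon h}^\theta=\psi_g^\theta+\varepsilon\psi_h^\theta$, and linearity of $\mathcal{D}$ then gives $\mathcal{D}[\psi_{g+\varepsilon h}^\theta]=\mathcal{D}[\psi_g^\theta]+\varepsilon\mathcal{D}[\psi_h^\theta]$. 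Substituting and expanding the square yields the exact identity
\begin{equation*}
\mathcal{J}(g+\varepsilon h;\theta)=\mathcal{J}(g;\theta)+2\varepsilon\int_D w\,(\mathcal{D}[\psi_g^\theta]-d_{\text{prescribed}})\,\mathcal{D}[\psi_h^\theta]\,\dd\vec{x}+\varepsilon^2\int_D w\,(\mathcal{D}[\psi_h^\theta])^2\,\dd\vec{x},
\end{equation*}
so the per-scenario difference quotient converges, as $\varepsilon\to0$, to the linear functional $\delta\mathcal{J}(g;\theta)[h]=2\int_D w(\mathcal{D}[\psi_g^\theta]-d_{\text{prescribed}})\mathcal{D}[\psi_h^\theta]\,\dd\vec{x}$. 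The regularisation term is treated identically, contributing $\alpha\langle g,h\rangle_{\leb2(\Gamma^-)}$ with a vanishing $O(\varepsilon)$ remainder.

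The main step is to justify $\tfrac{\dd}{\dd\varepsilon}\big|_{0}\mathbf{E}_\theta[\mathcal{J}(g+\varepsilon h;\theta)]=\mathbf{E}_\theta[\delta\mathcal{J}(g;\theta)[h]]$, for which I would verify the two hypotheses of Lemma~\ref{lemma:interchange}. Measurability of $\theta\mapsto\delta\mathcal{J}(g;\theta)[h]$ follows from the measurable dependence of $\theta\mapsto\psi_g^\theta$ together with boundedness of $\mathcal{D}$. For the domination hypothesis I would apply Cauchy--Schwarz to bound
\begin{equation*}
|\delta\mathcal{J}(g;\theta)[h]|\le 2\|w\|_{L^\infty(D)}\,\|\mathcal{D}[\psi_g^\theta]-d_{\text{prescribed}}\|_{L^2(D)}\,\|\mathcal{D}[\psi_h^\theta]\|_{L^2(D)},
\end{equation*}
and then use boundedness of $\mathcal{D}$ and continuity of $g\mapsto\psi_g^\theta$ to estimate $\|\mathcal{D}[\psi_h^\theta]\|_{L^2(D)}\le C\|h\|_{\leb2(\Gamma^-)}$, giving a uniform bound over unit directions. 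The dominating $M(\theta)$ is thus a constant multiple of $\|\mathcal{D}[\psi_g^\theta]-d_{\text{prescribed}}\|_{L^2(D)}$, whose integrability follows from the assumed integrability of its square by Jensen's inequality, since $\mathbb{P}$ is a probability measure.

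The hard part will be controlling the $\theta$-dependence of the constants uniformly: to obtain a genuine $M\in L^1(\Theta)$, I need the operator norm of $g\mapsto\psi_g^\theta$ (hence the constant $C$) to be uniform in $\theta$ — or at least integrable against $\mathbb{P}$ — as furnished by the uniform-in-$\theta$ continuity underpinning forward solvability. Granting this, the bound is uniform for $g$ in a bounded neighbourhood (where $\|g\|_{\leb2(\Gamma^-)}$ is controlled) and for all unit $h$, so Lemma~\ref{lemma:interchange} applies and delivers the interchange. Adding the trivially differentiable quadratic regularisation term $\tfrac{\alpha}{2}\|g\|^2_{\leb2(\Gamma^-)}$, whose Gâteaux derivative is $\alpha\langle g,h\rangle_{\leb2(\Gamma^-)}$, yields the claimed expression for $\delta\mathcal{J}(g;h)$ and establishes Gâteaux differentiability on $\mathcal{U}_{\mathrm{ad}}$.
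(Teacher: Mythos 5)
Your proposal is correct and follows essentially the same route as the paper: an exact quadratic expansion of the per-scenario fidelity term using linearity of $g\mapsto\psi_g^\theta$ and $\mathcal{D}$, followed by an appeal to Lemma~\ref{lemma:interchange} to interchange expectation and differentiation. If anything, you are more explicit than the paper about the domination hypothesis (Cauchy--Schwarz plus Jensen) and about the need for uniform-in-$\theta$ control of the operator norm, which the paper simply asserts.
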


\begin{proof}
  Fix $g, h \in \leb2(\Gamma^-)$. For each $\theta \in \Theta$, the
  mapping $g \mapsto \mathcal{D}[\psi_g^\theta]$ is linear and
  continuous by assumption. Therefore, the inner fidelity term
  \begin{equation}
    \mathcal{J}(g, \theta) := \int_D w(\vec{x}) \left( \mathcal{D}[\psi_g^\theta](\vec{x}) - d_{\text{prescribed}}(\vec{x}) \right)^2 \dd \vec{x}
  \end{equation}
  is a quadratic functional of $g$ and hence Gâteaux differentiable
  with pointwise directional derivative
  \begin{equation}
    \delta \mathcal{J}(g, \theta; h) =
    2 \int_D w(\vec{x}) \left( \mathcal{D}[\psi_g^\theta](\vec{x}) - d_{\text{prescribed}}(\vec{x}) \right) \cdot \mathcal{D}[\psi_h^\theta](\vec{x}) \dd \vec{x}.
  \end{equation}
  Since $g \mapsto \mathcal{D}[\psi_g^\theta]$ is continuous and
  bounded uniformly over $\theta$ on bounded sets, and since
  \begin{equation}
    \left| \delta \mathcal{J}(g, \theta; h) \right| \leq C(\theta) \, \|h\|_{\leb2(\Gamma^-)}
  \end{equation}
  for an integrable function $C(\theta)$ (e.g., a bound on $\|
  \mathcal{D}[\psi_h^\theta] \|$), the conditions of
  Lemma~\ref{lemma:interchange} apply. We may therefore interchange
  expectation and differentiation
  \begin{equation}
    \delta \mathcal{J}(g; h) = \mathbf{E}_\theta[\delta \mathcal{J}(g, \theta; h)] + \alpha \langle g, h \rangle_{\leb2(\Gamma^-)}.
  \end{equation}
  This proves the Gâteaux differentiability of $\mathcal{J}$ and
  provides the desired representation.
\end{proof}

\begin{corollary}[Gradient Representation via the Adjoint Problem]
  Let $\psi_g^\theta \in \cH$ denote the fluence associated to control
  $g$ under uncertainty realisation $\theta \in \Theta$, defined by
  the forward problem
  \begin{equation}
    \fL^\theta \psi_g^\theta = 0 \quad \text{in } \mathcal{C}, \qquad \psi_g^\theta = g \quad \text{on } \Gamma^-.
  \end{equation}
  Let $z_g^\theta \in \cbH_0^+$ solve the adjoint transport equation
  \begin{equation}
    \bL^\theta z_g^\theta = -\frac{q S^\theta}{\rho^\theta} \left( \mathcal{D}[\psi_g^\theta] - d_{\mathrm{prescribed}} \right) \quad \text{in } \mathcal{C}, \qquad z_g^\theta = 0 \quad \text{on } \Gamma^+.
  \end{equation}
  Then the Gâteaux derivative of $\mathcal{J}$ satisfies
  \begin{equation}
    \delta \mathcal{J}(g; h) = \mathbf{E}_\theta \left[ \ltwop{z_g^\theta}{h}_{\Gamma^-} \right] + \alpha \langle g, h \rangle_{\leb2(\Gamma^-)}.
  \end{equation}
  In particular, the $\leb2(\Gamma^-)$-gradient is given by
  \begin{equation}
    \nabla_g \mathcal{J}(g) = \mathbf{E}_\theta [z_g^\theta] + \alpha g.
  \end{equation}
\end{corollary}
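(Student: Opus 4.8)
The plan is to reduce the corollary, for each fixed scenario $\theta$, to the single \emph{adjoint identity}
\[
  2\int_D w(\vec x)\bigl(\mathcal{D}[\psi_g^\theta]-d_{\mathrm{prescribed}}\bigr)\,\mathcal{D}[\psi_h^\theta]\,\dd\vec x
  = \ltwop{z_g^\theta}{h}_{\Gamma^-},
\]
and then to average over $\theta$. The left-hand side is exactly the per-scenario fidelity derivative delivered by the preceding Gâteaux differentiability theorem, so once this identity is established the corollary follows by applying $\mathbf{E}_\theta[\cdot]$ (legitimate by Lemma~\ref{lemma:interchange}), appending the already-known derivative $\alpha\langle g,h\rangle_{\leb2(\Gamma^-)}$ of the Tikhonov term, and reading off the $\leb2(\Gamma^-)$-gradient by Riesz representation. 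Thus the entire content is the \textbf{adjoint trick}: trading the volumetric fidelity pairing for a boundary pairing against the adjoint state.

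First I would record that, by the assumed linearity of the control-to-state map, the sensitivity $\psi_h^\theta$ is itself a forward solution, $\fL^\theta\psi_h^\theta=0$ in $\mathcal{C}$ with $\psi_h^\theta=h$ on $\Gamma^-$. Next I would collapse the fidelity term into one phase-space pairing: since $w$ and the residual $\mathcal{D}[\psi_g^\theta]-d_{\mathrm{prescribed}}$ depend only on $\vec x$, while $\mathcal{D}$ integrates out $\omega,E$ against $S^\theta/\rho^\theta$, Fubini gives
\[
  2\int_D w\,\bigl(\mathcal{D}[\psi_g^\theta]-d_{\mathrm{prescribed}}\bigr)\,\mathcal{D}[\psi_h^\theta]\,\dd\vec x
  = \ltwop{\tfrac{qS^\theta}{\rho^\theta}\bigl(\mathcal{D}[\psi_g^\theta]-d_{\mathrm{prescribed}}\bigr)}{\psi_h^\theta}_{\leb2(\mathcal{C})},
\]
where $q=2w$ is the weight produced by differentiating the squared mismatch. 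Invoking the adjoint equation $\bL^\theta z_g^\theta=-\tfrac{qS^\theta}{\rho^\theta}(\mathcal{D}[\psi_g^\theta]-d_{\mathrm{prescribed}})$ replaces the source, so the fidelity term equals $-\ltwop{\bL^\theta z_g^\theta}{\psi_h^\theta}_{\leb2(\mathcal{C})}$ and it remains only to transfer $\bL^\theta$ onto $\psi_h^\theta$.

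The crux, and \textbf{the hard part}, is the boundary bookkeeping in this transfer. The adjoint-representation theorem gives $\ltwop{\fL\phi}{\psi}=\ltwop{\phi}{\bL\psi}$ only for \emph{homogeneous} traces (eq.~\eqref{dualinnerproduct}, with $\phi\in\cH_0^-$, $\psi\in\cbH_0^+$), whereas here $\psi_h^\theta$ carries the inhomogeneous inflow trace $h$ on $\Gamma^-$. I would therefore use the full Lagrange identity, tracking every boundary contribution: integrating the transport term $\pm\,\omega\cdot\nabla_{\vec x}$ by parts produces $\int_\Gamma(\omega\cdot\vec n)\,\psi_h^\theta z_g^\theta\,\dd\gamma$; pairing $-\partial_E(S\,\cdot)$ against $S\partial_E$ produces the energy endpoints $\big[S^\theta\psi_h^\theta z_g^\theta\big]_{E_{\min}}^{E_{\max}}$; the angular Laplacian $-\mu\Delta_\omega$ contributes nothing because $\mathbb{S}^2$ is closed; and the scattering pair $\cS,\cS^*$ cancels by construction, yielding
\[
  \ltwop{\fL^\theta\psi_h^\theta}{z_g^\theta}-\ltwop{\psi_h^\theta}{\bL^\theta z_g^\theta}
  = \int_\Gamma(\omega\cdot\vec n)\,\psi_h^\theta z_g^\theta\,\dd\gamma
  - \int_{D\times\mathbb{S}^2}\big[S^\theta\psi_h^\theta z_g^\theta\big]_{E_{\min}}^{E_{\max}}.
\]

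To finish I would impose the boundary data. The term $\fL^\theta\psi_h^\theta=0$ annihilates the first pairing; the condition $z_g^\theta=0$ on $\Gamma^+$ kills every contribution supported on $\{\omega\cdot\vec n\ge 0\}$ and on the $E=E_{\min}$ face; and the high-energy decay assumed as $E\uparrow E_{\max}$ removes the $E_{\max}$ endpoint. What survives is the pairing over $\{\omega\cdot\vec n<0\}$, where $\psi_h^\theta=h$ and $(\omega\cdot\vec n)=-|\omega\cdot\vec n|$, which is precisely $\ltwop{z_g^\theta}{h}_{\Gamma^-}$ — the sign of the adjoint source having been chosen exactly so that this surviving boundary pairing appears with the stated positive sign. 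This establishes the per-scenario adjoint identity; averaging over $\theta$ and adding $\alpha\langle g,h\rangle_{\leb2(\Gamma^-)}$ gives $\delta\mathcal{J}(g;h)=\mathbf{E}_\theta[\ltwop{z_g^\theta}{h}_{\Gamma^-}]+\alpha\langle g,h\rangle_{\leb2(\Gamma^-)}$, and Riesz representation identifies $\nabla_g\mathcal{J}(g)=\mathbf{E}_\theta[z_g^\theta]+\alpha g$. Beyond the sign bookkeeping, the main technical care is justifying that the surviving $\Gamma^-$ trace pairing is well defined (traces of $\cH$ elements lie in $\leb2(\Gamma^-)$) and that the $E_{\max}$ endpoint genuinely vanishes under the stated decay, so that the homogeneous-boundary duality of $\fL^\theta$ and $\bL^\theta$ upgrades cleanly to the inhomogeneous Green's identity.
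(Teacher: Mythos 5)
Your overall architecture is the right one, and in fact the paper states this corollary without proof, so the Lagrange-identity argument you supply is exactly the missing content: reduce to the per-scenario identity, rewrite the fidelity derivative as the phase-space pairing $\ltwop{\tfrac{qS^\theta}{\rho^\theta}(\mathcal{D}[\psi_g^\theta]-d_{\mathrm{prescribed}})}{\psi_h^\theta}_{\leb2(\mathcal{C})}$ with $q=2w$, substitute the adjoint equation, and transfer the operator using the full Green's identity
\begin{equation*}
  \ltwop{\fL^\theta\psi_h^\theta}{z_g^\theta}-\ltwop{\psi_h^\theta}{\bL^\theta z_g^\theta}
  = \int_\Gamma(\omega\cdot\vec n)\,\psi_h^\theta z_g^\theta\,\dd\gamma
  - \int_{D\times\mathbb{S}^2}\bigl[S^\theta\psi_h^\theta z_g^\theta\bigr]_{E_{\min}}^{E_{\max}},
\end{equation*}
which you derive correctly, including the vanishing of the angular and scattering contributions. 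The interchange with $\mathbf{E}_\theta$ via Lemma~\ref{lemma:interchange} and the Riesz identification at the end are also fine.

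The gap is in the final sign bookkeeping, which you assert rather than compute. Carrying your own identity to its conclusion: since $\fL^\theta\psi_h^\theta=0$ and the adjoint source is $\bL^\theta z_g^\theta=-\tfrac{qS^\theta}{\rho^\theta}(\mathcal{D}[\psi_g^\theta]-d_{\mathrm{prescribed}})$, the fidelity derivative equals $-\ltwop{\psi_h^\theta}{\bL^\theta z_g^\theta}$, which by the Green's identity is the boundary term $\int_{\{\omega\cdot\vec n<0\}}(\omega\cdot\vec n)\,h\,z_g^\theta\,\dd\gamma=-\ltwop{z_g^\theta}{h}_{\Gamma^-}$, because $\omega\cdot\vec n=-|\omega\cdot\vec n|$ on $\Gamma^-$ and the $\Gamma^-$ inner product carries the weight $|\omega\cdot\vec n|$. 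The two minus signs you invoke do not cancel: one checks this immediately in the one-dimensional analogue $\psi_h'=0$, $\psi_h(0)=h$, $-z'=-f$, $z(1)=0$, which gives $z(0)=-\int_0^1 f$ while the fidelity derivative is $+h\int_0^1 f$. So with the adjoint equation as stated the conclusion should read $\delta\mathcal{J}(g;h)=-\mathbf{E}_\theta[\ltwop{z_g^\theta}{h}_{\Gamma^-}]+\alpha\langle g,h\rangle_{\leb2(\Gamma^-)}$; to obtain the stated positive sign the adjoint source must be $+\tfrac{qS^\theta}{\rho^\theta}(\mathcal{D}[\psi_g^\theta]-d_{\mathrm{prescribed}})$ (equivalently, one must take $q=-2w$). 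This inconsistency is arguably inherited from the statement itself, but a correct proof has to surface it rather than claim the signs were ``chosen exactly so'' — as written, your argument proves the negation of the displayed formula.
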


\subsection{Stochastic Gradient Estimators}
\label{subsec:stochastic-gradients}

In the hybrid formulation, the fluence $\psi_g^\theta$ is computed via
stochastic forward simulations depending on a random parameter $\theta
\in \Theta$, while the adjoint variable $z_g^\theta$ is obtained by
solving a deterministic PDE conditioned on $\theta$. This results in
stochastic variation in both the forward dose and the corresponding
adjoint source term.

To compute the expected gradient $\nabla_g
\mathbf{E}_\theta[\mathcal{J}(g)]$, we use a Monte Carlo
estimator. Let $\{ \theta_i \}_{i=1}^N$ be independent samples from
the uncertainty distribution. For each $\theta_i$:

\begin{enumerate}
  \item Solve the forward problem to obtain the fluence $\psi_g^{\theta_i}$;
  \item Evaluate the dose $\mathcal{D}[\psi_g^{\theta_i}]$;
  \item Solve the adjoint equation
  \begin{equation}
    \bL^{\theta_i} z_g^{\theta_i} = -\frac{q S^{\theta_i}}{\rho^{\theta_i}} \left( \mathcal{D}[\psi_g^{\theta_i}] - d_{\mathrm{prescribed}} \right), \qquad z_g^{\theta_i} = 0 \text{ on } \Gamma^+;
  \end{equation}
  \item Compute the adjoint trace $z_g^{\theta_i}|_{\Gamma^-}$.
\end{enumerate}

The empirical gradient estimator is then defined as
\begin{equation}
  \nabla_g \mathcal{J}_N(g) := \frac{1}{N} \sum_{i=1}^N z_g^{\theta_i} + \alpha g.
\end{equation}
As $N \to \infty$, the estimator converges strongly in
$\leb2(\Gamma^-)$ to the true gradient $\mathbf{E}_\theta[z_g^\theta]
+ \alpha g$, under mild integrability conditions on $\theta \mapsto
z_g^\theta$.

The variance of the Monte Carlo estimator $\nabla_g \mathcal{J}_N(g)$
can be substantial if the dose $\mathcal{D}[\psi_g^\theta]$ is highly
sensitive to $\theta$. Standard techniques such as importance sampling
may be employed to reduce variance \cite{hoogenboom2008zero}. More
advanced strategies, such as multifidelity corrections or Gaussian
process surrogates, are effective when forward solves are expensive
and adjoint source terms vary smoothly with respect to $\theta$.  

\subsection{Stochastic Variational Inequalities for Constrained Optimisation}
\label{subsec:stochastic-vi}

The admissible set $\mathcal{U}_{\mathrm{ad}} \subset
\leb2(\Gamma^-)$, defined by box constraints on the control $g$, is
closed and convex. In the presence of stochastic uncertainty, the
first-order optimality condition for minimising the expected cost
functional subject to $g \in \mathcal{U}_{\mathrm{ad}}$ is
approximated using a finite-sample Monte Carlo estimator of the
gradient.

Let $\nabla_g \mathcal{J}_N(g) := z_g^N + \alpha g$ denote the
empirical gradient estimator, where $z_g^N$ is the sample average of
adjoint traces computed from $N$ realisations of the uncertainty
parameter $\theta$. The stochastic variational inequality
associated with this setting is
\begin{equation}
  \label{eq:stochastic-vi}
  \ltwop{z_g^N + \alpha g}{\tilde{g} - g}_{\Gamma^-} \ge 0
  \qquad \Foreach \tilde{g} \in \mathcal{U}_{\mathrm{ad}}.
\end{equation}
This condition characterises stationarity with respect to the
empirical gradient estimate and ensures that no admissible variation
decreases the sampled cost functional.

The variational inequality \eqref{eq:stochastic-vi} leads to a
projected stochastic gradient update of the form
\begin{equation}
  g_{k+1} = \mathcal{P}_{\mathcal{U}_{\mathrm{ad}}} \left( g_k - \tau_k \nabla_g \mathcal{J}_N(g_k) \right)
  = \mathcal{P}_{\mathcal{U}_{\mathrm{ad}}} \left( g_k - \tau_k (z_{g_k}^N + \alpha g_k) \right),
\end{equation}
where $\tau_k > 0$ is the step size and
$\mathcal{P}_{\mathcal{U}_{\mathrm{ad}}}$ is the orthogonal projection
onto the admissible set in $\leb2(\Gamma^-)$. Under appropriate
assumptions on step sizes and control of the gradient variance, this
iteration converges almost surely to a solution of the true
variational inequality as $N \to \infty$.

  \label{rem:projection}
  The projection onto the admissible set is computed pointwise almost everywhere as
  \begin{equation}
    \mathcal{P}_{\mathcal{U}_{\mathrm{ad}}}[g](\vec{x}, \omega, E)
    = \min\left\{ g_{\max}(\vec{x}, \omega, E), \max\{ 0, g(\vec{x}, \omega, E) \} \right\}.
  \end{equation}
  This ensures that the control remains feasible throughout the
  iteration and enforces the physical constraints imposed by the
  treatment delivery system.

\subsection{Stochastic Realisations and Empirical Primal–Dual Systems}

In practice, the expected fluence and dose quantities in the hybrid
optimisation framework cannot be computed exactly. Instead, they are
approximated using a finite number of stochastic realisations. Let $\{
\theta^{(i)} \}_{i=1}^N$ be independent samples from the uncertainty
distribution $\mathbb{P}_\theta$, and let $\{ \psi_g^{(i)} \}_{i=1}^N$
be the corresponding forward fluence solutions, i.e., $\psi_g^{(i)} :=
\psi_g^{\theta^{(i)}}$.

We define the empirical average of these realisations as
\begin{equation}
  \bar{\psi}_g^N := \frac{1}{N} \sum_{i=1}^N \psi_g^{(i)}.
\end{equation}
The corresponding empirical dose is then given by
\begin{equation}
  \mathcal{D}[\bar{\psi}_g^N](\vec{x}) 
  := \int_{\mathcal{E}} \frac{S(\vec{x}, E)}{\rho(\vec{x})}
  \int_{\mathbb{S}^2} \bar{\psi}_g^N(\vec{x}, \omega, E) \, \dd \omega \dd E.
\end{equation}

We define the empirical adjoint $z_g^N$ as the solution to the adjoint
PDE with source determined by the empirical dose discrepancy:
\begin{equation}
  \bL z_g^N = -\frac{q S}{\rho} \left( \mathcal{D}[\bar{\psi}_g^N] - d_{\text{prescribed}} \right),
  \qquad z_g^N = 0 \quad \text{on } \Gamma^+.
\end{equation}

Together, these yield the empirical primal–dual system:
\begin{align}
  \fL \bar{\psi}_g^N &= 0, &\quad & \text{in } \mathcal{C}, 
  & \bar{\psi}_g^N &= g, & \text{on } \Gamma^-,
  \nonumber \\
  \bL z_g^N + \frac{q S}{\rho} \left( \mathcal{D}[\bar{\psi}_g^N] - d_{\text{prescribed}} \right) &= 0, &\quad & \text{in } \mathcal{C}, 
  & z_g^N &= 0, & \text{on } \Gamma^+,
  \\
  g^\star - \Pi_{[0, g_{\max}]} \left( -\frac{1}{\alpha} z_g^N \right)&= 0, &\quad & \text{on } \Gamma^-.& & & 
  \nonumber
\end{align}

\begin{remark}[Stochastic Variational Inequality from the Empirical System]
  The projection formula from Remark~\ref{rem:projection} applies to
  the empirical adjoint $z_g^N$ as well. Each sample-based
  approximation satisfies the variational inequality
  \begin{equation}
    \ltwop{z_g^N + \alpha g^\star}{\tilde{g} - g^\star}_{\Gamma^-} \geq 0 
    \quad \Foreach \tilde{g} \in \mathcal{U}_{\mathrm{ad}}.
  \end{equation}
  This characterises a sample-wise approximation of the true
  stochastic variational inequality and supports the use of empirical
  gradients in projected optimisation.
\end{remark}

  The empirical adjoint $z_g^N$ serves as a Monte Carlo gradient
  estimator for the expected fidelity functional. As $N \to \infty$,
  it converges (in probability) to the true adjoint $z_g :=
  \mathbf{E}_\theta[z_g^\theta]$, and the empirical system recovers
  the deterministic optimality conditions. In practice, small $N$
  yields efficient approximate updates and variance reduction
  techniques can be used to stabilise and accelerate convergence.

\subsection{Algorithmic Optimisation Strategy}
\label{subsec:algorithm}

We now present a numerical implementation of the hybrid SDE–PDE
framework for robust treatment planning. This algorithm combines
high-fidelity stochastic forward simulations with deterministic
adjoint-based gradient computation to solve the risk-neutral
optimisation problem under physical uncertainty.

At each iteration, the current control $g^k$ is used to generate $N$
realisations of the stochastic fluence $\psi_g^\theta$. These are
averaged to yield an empirical estimate $\bar{\psi}_{g^k}^N$ of the
expected fluence. The corresponding dose discrepancy is used to
construct the source term in the adjoint Boltzmann--Fokker--Planck
equation, which is solved deterministically to produce the empirical
gradient direction $z^k$. A projected gradient update is then applied
to ensure feasibility with respect to box constraints.

\begin{algorithm}[h!]
  \caption{Hybrid SDE-PDE Optimisation Algorithm}
  \label{alg:hybrid}
  \begin{algorithmic}[1]
    \State \textbf{Input.} Initial control $g^0 \in \mathcal{U}_{\mathrm{ad}}$, step size $\alpha > 0$, tolerance $\varepsilon > 0$, sample size $N$
    \State \textbf{Output.} Approximate minimiser $g^\star$
    
    \While{$\| z^k + \alpha g^k \|_{\leb2(\Gamma^-)} > \varepsilon$}
    
      \State \textbf{(i) Forward Solve.} Simulate $N$ samples $\{\psi^{\theta_i}_{g^k}\}_{i=1}^N$ using the SDE model with inflow $g^k$
      \State \hspace{1em} Compute the empirical average $\bar{\psi}^N_{g^k} := \frac{1}{N} \sum_{i=1}^N \psi^{\theta_i}_{g^k}$
      \State \hspace{1em} Evaluate dose $\mathcal{D}[\bar{\psi}^N_{g^k}]$
    
      \State \textbf{(ii) Adjoint Solve.} Solve
      \[
      \bL z^k = -\frac{q S}{\rho} \left( \mathcal{D}[\bar{\psi}^N_{g^k}] - d_{\text{prescribed}} \right), \qquad z^k = 0 \text{ on } \Gamma^+
      \]
    
      \State \textbf{(iii) Control Update.}
      \[
      g^{k+1} = \Pi_{[0, g_{\max}]} \left( -\frac{1}{\alpha} z^k \right)
      \]
    
      \State \textbf{(iv) Convergence Check.} If $\| z^k + \alpha g^k \|_{\leb2(\Gamma^-)} \leq \varepsilon$, \textbf{break}
    
    \EndWhile
    \State \textbf{Return:} $g^\star = g^k$
  \end{algorithmic}
\end{algorithm}

The accuracy of the gradient estimate depends on the number of Monte
Carlo samples $N$ used to compute $\bar{\psi}_{g^k}^N$. Early
iterations may tolerate high variance and hence use small $N$, while
later stages may require more accurate gradients. Adaptive sampling
schemes, in which $N$ increases with $k$ or is chosen based on
variance estimates, can improve efficiency and convergence robustness.

\subsection{Numerical Demonstration in One Dimension}
\label{subsec:numerical-1d}

To illustrate the hybrid optimisation strategy, we present a simple
numerical experiment in a one-dimensional water phantom. The spatial
domain is an $8.5\,\mathrm{cm}$ homogeneous slab of water, discretised
uniformly in depth and energy. A bank of $20$ pencil beams is
constructed with varying initial energies, each modelled as a Gaussian
energy distribution. Dose and fluence profiles are computed using a
Monte Carlo simulation of a stopping-power-driven stochastic
differential equation.

The goal is to match a prescribed dose profile delivering uniform dose
to a target region between $3$ and $6\,\mathrm{cm}$ depth. A weighted
least-squares cost functional is minimised over nonnegative linear
combinations of the pencil beams, corresponding to intensity-modulated
inputs. The resulting weights define the optimal beam configuration,
from which dose and fluence are computed.

Figure~\ref{fig:1d-fluence} shows the optimised fluence in
depth-energy coordinates, along with the relative input spectrum and
resulting depth-dose curve. The modulation of Bragg peaks across the
beam bank produces a spread-out Bragg peak (SOBP) that conforms to the
target region.

\begin{figure}[h!]
  \centering
  \includegraphics[width=0.95\textwidth]{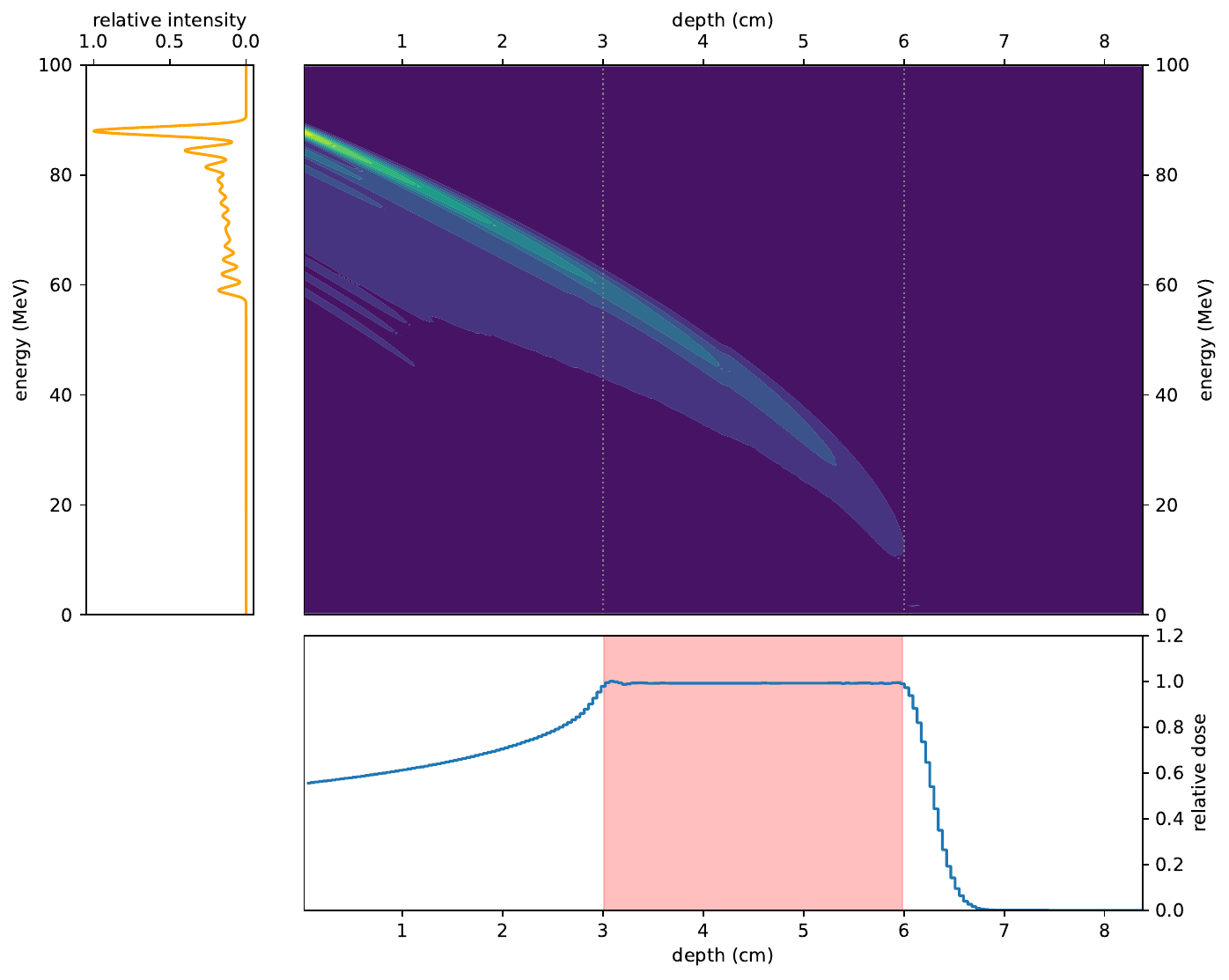}
  \caption{Numerical result of the 1D dose optimisation experiment.
    \textbf{Top left:} Relative energy spectrum of the optimised beam.
  \textbf{Bottom:} Resulting depth-dose curve.
  \textbf{Top right:} Fluence in depth-energy coordinates. The target region
  between $3$ and $6\,\mathrm{cm}$ is shown in red.}
  \label{fig:1d-fluence}
\end{figure}

\section{Conclusion}
\label{sec:conclusion}

This work develops a unified stochastic-deterministic framework for
modelling proton transport in radiotherapy. Starting from a stochastic
process describing individual proton tracks, we established its
connection to the Boltzmann--Fokker--Planck equation via variational
theory and dual semigroup structures. The resulting resolvent
formulation links expected occupation measures with deterministic PDE
solutions, providing a rigorous basis for interpreting dose as both a
stochastic functional and a variational quantity.

The adjoint relationship between forward and backward transport
enables a dual characterisation of fluence and dose, with implications
for uncertainty quantification, control, and optimisation. As a proof
of concept, we applied this framework to a robust treatment planning
problem, using stochastic forward models and deterministic
adjoint-based gradient computation. A simple numerical demonstration
illustrated how the theory underpins hybrid algorithm design.

Beyond optimisation, the stochastic-deterministic duality supports
theoretical analysis, variance reduction, and model reduction
strategies. It provides a coherent foundation for incorporating
uncertainty into transport-based methods, bridging probabilistic
simulation and PDE-based inference in a mathematically consistent way.

{\sloppy
\printbibliography}

\end{document}